\documentclass[12pt,leqno]{amsart}
\usepackage[T1]{fontenc}
\usepackage{amsmath,amssymb,amsthm,amsfonts,mathtools,mathrsfs}
\usepackage[shortlabels]{enumitem}
\usepackage{microtype}
\usepackage{xcolor}
\usepackage[hypertexnames=false]{hyperref}
\usepackage[nameinlink,noabbrev]{cleveref}
\usepackage{mathdots}
\usepackage{xstring}
\usepackage{fontawesome5}

\definecolor{linkblue}{RGB}{35,80,130}
\hypersetup{
  colorlinks=true,
  allcolors=linkblue
}

\newcommand{\gh}[1]{%
  \begingroup
  \StrBehind{#1}{github.com/}[\githubpath]%
  \StrBehind{\githubpath}{/}[\githubname]%
  \href{#1}{\faGithub\,\texttt{\githubname}}%
  \endgroup
}

\allowdisplaybreaks
\numberwithin{equation}{section}

\theoremstyle{plain}
\newtheorem{theorem}{Theorem}[section]
\newtheorem{proposition}[theorem]{Proposition}
\newtheorem{corollary}[theorem]{Corollary}
\newtheorem{lemma}[theorem]{Lemma}

\theoremstyle{definition}
\newtheorem{definition}[theorem]{Definition}
\newtheorem{remark}[theorem]{Remark}
\newtheorem{question}[theorem]{Question}

\renewcommand{\le}{\leqslant}
\renewcommand{\ge}{\geqslant}

\newcommand{\abs}[1]{\lvert#1\rvert}
\newcommand{\norm}[1]{\lVert#1\rVert}
\newcommand{\N}{\mathbb{N}}

\newcommand{\R}{\mathbb{R}}

\newcommand{\one}{\mathbf{1}}
\newcommand{\supp}{\operatorname{supp}}

\newcommand{\parent}{\pi}
\newcommand{\BX}{B_X}
\newcommand{\solid}{\operatorname{solid}}
\newcommand{\solidgen}{\operatorname{solidgen}}
\newcommand{\leanstatement}[3]{%
  \href{https://github.com/pedrotradacete/OrderClosures/blob/f82095757a85fe355d4d11bde3531d5a42f46dd6/#1\#L#2}%
  {\ensuremath{\forall}\,\nolinkurl{#3}}}
\newcommand{\leanformalization}[1]{%
  {\normalfont\footnotesize [#1]}}

\title{Order closure, order adherence and Fatou norms}

\author{A.~Avil\'es}
\address{Universidad de Murcia, Departamento de Matem\'aticas, Campus de Espinardo 30100 Murcia, Spain.}
\email{avileslo@um.es}

\author{M.~A.\ Taylor}
\address{Department of Mathematics\\
ETH Z\"urich, Ramistrasse 101, 8092 Z\"urich, Switzerland.
} \email{mitchell.taylor@math.ethz.ch}

\author{P.~Tradacete}
\address{Instituto de Ciencias Matem\'aticas (CSIC-UAM-UC3M-UCM)\\
Consejo Superior de Investigaciones Cient\'ificas\\
C/ Nicol\'as Cabrera, 13--15, Campus de Cantoblanco UAM\\
28049 Madrid, Spain.}
\email{pedro.tradacete@icmat.es}
\date{}

\subjclass[2020]{46B42, 46A40, 46A19} 

\keywords{Order closure; $uo$-convergence; Fatou norm; Banach lattice; solid set}

\begin{document}

\begin{abstract}
We record two results in the theory of vector and Banach lattices related to order adherence. First,
we give a negative answer to Gao and Leung's question on whether the
$uo$-adherence of sublattices must be order closed. Second, we present a
self-contained counterexample showing that a Banach lattice with a weakly Fatou norm
need not admit any equivalent lattice norm with the Fatou property.
\end{abstract}

\maketitle

\section{Introduction}\label{sec:introduction}

Recall that a net $(x_\alpha)$ in a vector lattice $X$ \emph{order converges} to $x\in X$, denoted $x_\alpha\xrightarrow{o}x$, if there exists a net $y_\beta\downarrow 0$ (decreasing with infimum 0) in $X$ such that for all $\beta$ there exists $\alpha_0$ with 
\[
|x_\alpha-x|\leq y_\beta,
\]
for any $\alpha\geq \alpha_0$. The net $(x_\alpha)$ \emph{$uo$-converges} (unbounded order converges) to $x\in X$, denoted $x_\alpha\xrightarrow{uo}x$, if 
\[
|x_\alpha-x|\wedge u\xrightarrow{o}0
\]
for all $u\in X_+$. In spaces of measurable functions, it is well-known that a sequence $uo$-converges to zero if and only if it converges to zero pointwise almost everywhere (i.e.~up to a set of zero measure). On the other hand, Bilokopytov and Troitsky \cite{MR4366912} have recently shown that in spaces of continuous functions $C(K)$, a sequence  $uo$-converges to zero if and only if it converges to zero pointwise on a comeagre set. For further information on order and $uo$-convergence we refer the reader to \cite{MR3666441, MR3817114, Tay, Tay1}. 

Given a notion of convergence $c\in \{o,uo\}$, a subset $Y$ of a vector lattice $X$ is \emph{$c$-closed}  if $\overline{Y}^c=Y$. The \emph{$c$-closure} of $Y$ in $X$ is the smallest $c$-closed subset of $X$ containing $Y$. We will denote the \emph{$c$-adherence}  of $Y$ in $X$ by $\overline{Y}^c:=\{x\in X : \exists\ (y_\alpha)\subseteq Y \ \text{with} \ y_\alpha\xrightarrow{c}x\}$. Our aim here will be to exploit the fact that unlike with topological closures, $\overline{Y}^c$ need not be $c$-closed and that an arbitrarily large number of iterations of $c$-adherence might be needed to reach the $c$-closure of a set. 

    With the motivation of providing a general approach to a fundamental result in financial economics concerning the spanning power of options written on a financial asset, Gao and Leung investigated the question of identifying the order closure of a given sublattice \cite{MR3731703}. Let $X$ be a Banach lattice. By passing to a summable subsequence, it is easy to see that norm convergent sequences have order convergent subsequences. Hence, if $X$ is order continuous  -- meaning that order convergence implies norm convergence in $X$ -- then $\overline{Y}^o$ will be order closed for every subset $Y\subseteq X$. Remarkably, Gao and Leung were able to establish the converse to this statement, even after restricting their attention to sublattices. More specifically, they proved the following theorem.
\begin{theorem}[\cite{MR3731703}, Theorem 2.7]\label{ND}
Let $X$ be a $\sigma$-order complete Banach lattice. The following statements are equivalent.
\begin{enumerate}
    \item The order adherence of every sublattice of $X$ is order closed.
    \item The order and $uo$-adherence of every sublattice agree.
    \item $X$ is order continuous.
\end{enumerate}
\end{theorem}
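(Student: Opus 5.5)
Since Theorem~\ref{ND} is cited from Gao and Leung, I would reconstruct its proof through the cycle (3)$\Rightarrow$(1), (3)$\Rightarrow$(2), and the two contrapositives $\neg$(3)$\Rightarrow\neg$(1) and $\neg$(3)$\Rightarrow\neg$(2).

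\textbf{Order continuity gives (1) and (2).} Assume $X$ is order continuous.

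For (1), let $x\in\overline{\overline{Y}^o}^o$, witnessed by a net $z_\alpha\xrightarrow{o}x$ with $z_\alpha\in\overline{Y}^o$. Order continuity gives $z_\alpha\to x$ in norm, and likewise each $z_\alpha$ is a norm limit of elements of $Y$. Hence $x$ lies in the norm closure of $Y$, so some sequence $y_n\in Y$ satisfies $\|y_n-x\|<2^{-n}$. Then $\sum_n|y_n-x|$ converges in norm, and
\[
|y_n-x|\le \sum_{k\ge n}|y_k-x|\downarrow 0 .
\]
So $y_n\xrightarrow{o}x$, which puts $x\in\overline{Y}^o$. This argument works for every subset $Y$, not only sublattices.

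For (2), use that in an order continuous Banach lattice, $uo$-convergence of a net implies $un$-convergence. I would then need to pass from a $uo$-limit of elements of a sublattice $Y$ to an order limit. The idea is to truncate: replace $y_\alpha$ by elements of $Y$ built from lattice operations with $y_\alpha$. Concretely, if $y_\alpha\xrightarrow{uo}x$ with $x\ge0$, then $y_\alpha^+\wedge y_{\alpha'}^+$-type combinations stay in $Y$. This is delicate, because $x$ itself need not lie in $Y$.

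I would instead go through norm closures. In an order continuous space the norm closure of a sublattice $Y$ equals its $uo$-closure; this is a known fact from Gao--Troitsky--Xanthos on $uo$-closed sublattices. The norm closure is contained in $\overline{Y}^o$ by the summable-subsequence argument above. That yields $\overline{Y}^{uo}\subseteq\overline{Y}^o$, and the reverse inclusion is trivial.

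\textbf{Failure of order continuity.} Now suppose $X$ is $\sigma$-order complete but not order continuous. By the classical characterization (Meyer-Nieberg), $X$ contains a lattice copy of $\ell_\infty$: a disjoint, order bounded sequence $(e_n)$ with $\|e_n\|=1$ whose bands give a lattice embedding of $\ell_\infty$. The task is then to build a sublattice $Y$ of this copy that witnesses the failure of both (1) and (2).

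\textbf{Constructing $Y$.} This is the main obstacle. Everything happens inside $\ell_\infty$, where order convergence is bounded a.e.\ convergence and $uo$-convergence is pointwise convergence.
\begin{itemize}
\item To break (2), I would take $Y$ to be the sublattice generated by the elements $n e_n$ together with the finitely supported sequences. Its $uo$-adherence contains unbounded limits, for instance $\sum_n n\,e_n$ in the ambient space where it exists. Order limits of nets from $Y$ must be eventually order bounded, and a careful choice of $Y$ keeps such unbounded elements out of $\overline{Y}^o$.
\item To break (1), I need a two-step adherence. I would use elements like $u_{n,m}=e_n+\tfrac1m e_m$ and their lattice combinations, so that reaching certain points requires first letting $m\to\infty$ and then $n\to\infty$ with no bounded diagonal available. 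The standard trick is a sublattice of the sequences that are eventually constant on blocks.
\end{itemize}

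One must verify that $\sigma$-order completeness makes the $\ell_\infty$-copy order-regular in $X$, so that order convergence in the copy agrees with order convergence in $X$. I expect this regularity check, together with ruling out clever nets reaching the target in one step, to be the hardest part.
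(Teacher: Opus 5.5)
Your (3)$\Rightarrow$(1) argument is correct. The other half of the theorem, which carries all the content, has a genuine gap: no sublattice witnessing the failure of (1) or (2) is actually produced, and neither of your sketches can work. Each of them contains $c_{00}$. The first does so by definition. The second does because $(e_n+\tfrac1m e_m)\wedge(e_n+\tfrac1{m'}e_{m'})=e_n$ whenever $n,m,m'$ are distinct, after which $\tfrac1m e_m$, and hence every $e_m$, lies in $Y$. But if $c_{00}\subseteq Y\subseteq\ell_\infty$, every bounded sequence is the order limit of its finite truncations, so $\overline{Y}^{o}=\overline{Y}^{uo}=\ell_\infty$, which is order closed. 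Unbounded limits such as $\sum_n ne_n$ cannot rescue this, since the theorem must cover $X=\ell_\infty$, where they do not exist. The paper does not reprove this theorem (it quotes it from Gao--Leung), so the missing construction has to come from you.

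Two observations close the gap. First, you do not need separate witnesses. The inclusion $\overline{Y}^{uo}\subseteq\overline{\overline{Y}^o}^o$ for sublattices (Gao--Leung's Lemma~2.1, quoted in the introduction; truncate $y_\alpha$ at $|y_\beta|$) gives (1)$\Rightarrow$(2). So (3)$\Rightarrow$(1)$\Rightarrow$(2) holds without the Gao--Troitsky--Xanthos fact, and one sublattice with $\overline{Y}^o\neq\overline{Y}^{uo}$ refutes both (1) and (2). Second, that sublattice must avoid enough of $c_{00}$ and force norms to blow up under coordinatewise approximation. For instance, on the countable set $\Gamma=\N\cup\N^2$ let $Y=\{y\in\ell_\infty(\Gamma):\lim_m y(n,m)=n\,y(n)\text{ for every }n\}$. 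This is a sublattice, since these limit relations are preserved by $\vee$ and $\wedge$. Put $y^{(k)}(n)=1$ for $n\le k$, $y^{(k)}(n,m)=n$ for $n\le k\le m$, and $y^{(k)}=0$ elsewhere. Then $y^{(k)}\in Y$ and $y^{(k)}\to x:=\chi_{\N}$ coordinatewise, so $x\in\overline{Y}^{uo}$. On the other hand, $y\in Y$ with $\|y\|_\infty\le M$ forces $|y(n)|\le M/n$, and order convergent nets are eventually bounded, so $x\notin\overline{Y}^o$. You flagged the transfer to $X$ as the hardest step, but it is routine. Index your normalized disjoint order bounded sequence as $(e_\gamma)_{\gamma\in\Gamma}$ and set $J(a)=\sup_\gamma a^+(\gamma)e_\gamma-\sup_\gamma a^-(\gamma)e_\gamma$. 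The band projections $P_\gamma$ onto $\{e_\gamma\}^{dd}$ exist by $\sigma$-order completeness, are order continuous, and satisfy $P_\gamma J(a)=a(\gamma)e_\gamma$. Moreover, $|J(a)|\le w$ forces $\|a\|_\infty\le\|w\|$. Hence a net $J(y_\alpha)\xrightarrow{o}J(x)$ in $X$ with $y_\alpha\in Y$ would be eventually bounded and coordinatewise convergent in $\ell_\infty(\Gamma)$, which is impossible. Finally, $J$ is order continuous, so its range is a regular sublattice; $uo$-convergence passes from it to $X$, giving $J(y^{(k)})\xrightarrow{uo}J(x)$ in $X$.
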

In addition to \Cref{ND}, they were able to prove in \cite[Lemma 2.1]{MR3731703} that if $Y$ is a sublattice of a vector lattice $X$, then we have the inclusions 
$$\overline{Y}^o\subseteq \overline{Y}^{uo}\subseteq \overline{\overline{Y}^o}^o.$$
Moreover, if $\overline{Y}^{uo}$ is order closed, then it coincides with the order closure of $Y$, and $\overline{Y}^{uo}= \overline{\overline{Y}^o}^o$. In this case, it  follows that $\overline{\overline{\overline{Y}^o}^o}^o=\overline{\overline{Y}^o}^o$. Together with statements (i) and (ii) of \Cref{ND}, these observations motivate the question of whether the $uo$-adherence of a sublattice $Y$ is always order closed. Indeed, this question was explicitly stated in Problem 2.5 of their paper.

\begin{question}[\cite{MR3731703}, Problem 2.5]
Is $\overline{Y}^{uo}$ order closed for every sublattice of a vector lattice $X$?
\end{question}

In \Cref{sec:gao-leung}, we  give a strong negative resolution to this problem by showing that there is no universal bound on the number of iterated order adherences one must take before reaching an order closed sublattice.  This is done by translating into the Banach lattice framework the analogous construction for Boolean algebras first due to Gaifman \cite{zbMATH03205463} and Hales  \cite{zbMATH03194564} and later simplified by Solovay  \cite{MR186598}. The remainder of this section contains a discussion on the size of the order adherence of a solid set $S$ in terms of the size of a generating set of $S$. In particular, we will show in \Cref{thm:solid-iterations} that solid sets may also need an arbitrarily large number of order adherence iterations to reach their order closure.

The second purpose of the paper is to present a self-contained solution to a problem originally posed by D.~Fremlin \cite{fremlinlist} on weakly Fatou norms and resolved by M.~Elliot in \cite{Elliott}. Recall that a lattice norm $\|\cdot\|$ on a vector lattice $X$ has the \emph{Fatou
property} if
\[
\|x\|=\sup_\alpha \|x_\alpha\|
\]
whenever $0\le x_\alpha \uparrow x$ in $X$.  It is \emph{weakly Fatou} if
there exists $K\ge 1$ such that
\[
\|x\|\le K \sup_\alpha \|x_\alpha\|
\]
whenever $0\le x_\alpha \uparrow x$. Clearly, every equivalent renorming of a Fatou norm is weakly Fatou. Fremlin's question is whether the converse holds:
\begin{question}[\cite{fremlinlist}, Problem AB]
   Is every weakly Fatou norm equivalent to a Fatou norm?
\end{question}
We emphasize that the solution to the above problem is known. Indeed, a solution was announced by A.~Wickstead and credited to M. Elliott \cite{Elliott} at the Positivity IX conference in Edmonton, Alberta, 2017. However, the proof has never been published or publicly released. In Section \ref{sec:fremlin}, we present a self-contained construction of a weakly Fatou norm on a Banach lattice which is not equivalent to a Fatou norm.

The connection between Fremlin's problem and order adherence is very natural: If $\|\cdot\|$ is a weakly Fatou norm on $X$, then there is some $K>0$ such that the unit ball satisfies 
\[
\overline{B_{(X,\|\cdot\|)}}^o\subset K B_{(X,\|\cdot\|)}.
\]
Iterating this process $n$ times, we get that
\[
\overline{\overline{B_{(X,\|\cdot\|)}}^{o^{\iddots}}}^o\subset K^n B_{(X,\|\cdot\|)}.
\]
On the other hand, if $\|\cdot\|$ is Fatou we must have $\overline{B_{(X,\|\cdot\|)}}^o=B_{(X,\|\cdot\|)}$. 

In our approach, we will produce for every natural number $n$ a norm on a space of continuous functions which is weakly Fatou with constant 2, but whose $n$-th iterated order adherence of the unit ball has an element of norm $2^n$. This will force any Fatou norm on the space to have a bad equivalence constant with the original norm. Gluing these spaces together for $n\in \mathbb N$ we obtain the claimed construction. 

Our motivation for giving an example of a solid set whose order adherence is not order closed was to correct a mistake made in \cite{Tay}, which wrongly assumed that an increasing supremum of increasing supremum could be written as an increasing supremum. We thank E.~Bilokopytov and K.~Abela for pointing out this mistake to the author. After several months of work, we produced the material in the first two sections of the present paper. Given the above connection to Fremlin's problem, it was natural to ask whether our ideas could also give a simpler solution to this problem. However, constructing the norm so that the solid set whose order adherence keeps expanding is the unit ball is highly non-trivial. To overcome this, we rely heavily on ideas from \cite{Elliott}. The ideas in \cite{Elliott} are ingenious, but the execution is quite complicated and indirect. Our main contribution is to simplify and distill these ideas to produce a Lean-verified and easily digestible proof that we believe will be useful to the community.

\subsection{Lean formalization}

The results presented in this paper have been formalized in
Lean 4 using version \verb|v0.1.0| of
\gh{https://github.com/davidmunozlahoz/banlat}, a Lean library for
Banach lattices. This version of the library, in turn, depends on Mathlib version
\verb|v4.30.0| \gh{https://github.com/leanprover-community/mathlib4}.
Each of the results in \Cref{sec:gao-leung} and \Cref{sec:fremlin} includes a link to the corresponding
Lean declaration pointing to a specific line in
\gh{https://github.com/pedrotradacete/OrderClosures}. For completeness we have also included formalization of Solovay's construction from \cite{MR186598}, which is fundamental for the proof of \Cref{prop:gao-counterexample}, as well as the formalization of \Cref{ND} from \cite{MR3731703}.

\section{The Gao-Leung problem on $uo$-adherences of sublattices}\label{sec:gao-leung}

The next result gives a strong negative answer to the Gao-Leung problem.

\begin{proposition}\label{prop:gao-counterexample}
\leanformalization{\leanstatement{OrderClosures/GaoLeungProblem/Counterexample.lean}{32}{gao_counterexample}}
For any cardinal $\kappa$ there exists a compact Hausdorff space $K$ such
that $C(K)$ is order complete and has density character at least $\kappa$,
together with a separable norm closed sublattice $Y\subseteq C(K)$ such that
the only order closed sublattice of $C(K)$ containing $Y$ is $C(K)$ itself.
\end{proposition}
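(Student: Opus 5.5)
We will transfer the Gaifman--Hales--Solovay theorem to $C(K)$ by Stone duality. Solovay's construction gives, for any cardinal $\kappa$, a complete Boolean algebra $\mathcal B$ of cardinality at least $\kappa$ generated, as a complete Boolean algebra, by a countable subset $\{b_n:n\in\N\}$. Its Stone space $K$ is extremally disconnected, so $C(K)$ is order complete. The clopen sets of $K$ are the elements of $\mathcal B$. Distinct clopen indicators are at norm distance $1$, so the density character of $C(K)$ is at least $\abs{\mathcal B}\ge\kappa$. Let $Y$ be the norm closed sublattice generated by $\one$ and the indicators $\one_{b_n}$. It is separable, since rational lattice-linear combinations of finitely many generators are dense in it.

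Now let $Z\supseteq Y$ be an order closed sublattice; we must show $Z=C(K)$. First consider $\mathcal A=\{b\in\mathcal B:\one_b\in Z\}$. It contains every $b_n$ and $0,1$. It is closed under finite joins and meets because $\one_{a\vee b}=\one_a\vee\one_b$ and $\one_{a\wedge b}=\one_a\wedge\one_b$. It is closed under complements since $\one_{b^c}=\one-\one_b$ and $Z$ is a vector subspace containing $\one$. For an arbitrary family $(a_i)\subseteq\mathcal A$, the finite joins form a net in $Z$ increasing to $\one_{\bigvee a_i}$ in $C(K)$. Here one needs that suprema in $C(K)$ of clopen indicators are indicators of the Boolean supremum, which holds in the Stone space of a complete algebra. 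Order closedness therefore gives $\one_{\bigvee a_i}\in Z$. Hence $\mathcal A$ is a complete subalgebra containing the generators, and so $\mathcal A=\mathcal B$.

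Thus $Z$ contains all indicators of clopen sets, and hence all their linear combinations. These are the continuous functions with finite range, which are norm dense in $C(K)$ because $K$ is zero-dimensional. To finish, we need to pass from norm density to $Z$, and we will use order closedness again. For $f\in C(K)$, pick simple $s_n$ with $\norm{f-s_n}_\infty\le 2^{-n}$. Then $\abs{s_n-f}\le 2^{-n}\one\downarrow0$, so $s_n\xrightarrow{o}f$ and $f\in Z$.

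The main obstacle is the input from Boolean algebra: the existence of countably generated complete Boolean algebras of arbitrarily large size. We take this from Solovay \cite{MR186598}, and the formalization includes it. The other point needing care is checking that arbitrary joins in $\mathcal B$ agree with order suprema in $C(K)$, since $C(K)$ suprema need not be pointwise. Because $K$ is extremally disconnected, the supremum is the indicator of the closure of the union of the clopen sets. That closure is clopen and corresponds to $\bigvee a_i$.
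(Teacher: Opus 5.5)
Your proposal is correct and follows the paper's proof essentially step by step. The shared steps are Solovay's countably completely generated algebra, its extremally disconnected Stone space, $Y$ generated by the generators and $\one$, and the key point that $\{b:\one_b\in Z\}$ is a complete subalgebra containing the generators, because order suprema of clopen indicators in $C(K)$ are indicators of Boolean suprema. The differences are minor:

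\begin{itemize}
\item You check closure under complements explicitly via $\one-\one_b$, which the paper leaves implicit.
\item To reach all of $C(K)$, you observe that an approximation $\norm{f-s_n}_\infty\le 2^{-n}$ by finite-range functions already order converges. The paper instead first notes that order closed sublattices are norm closed and then applies Stone--Weierstrass.
\end{itemize}
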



\begin{remark}\label{rem:gao-cardinality}
To see that \Cref{prop:gao-counterexample} answers the question of Gao and Leung, we include a simple cardinality estimate.
Let $D$ be a subset of a vector lattice $X$ of cardinality $\sigma$, and let
$\overline{D}^o$ denote the order adherence of $D$ in $X$. We claim that
\[
\abs{\overline{D}^o}\le 2^{2^\sigma}.
\]
Fix $x\in \overline{D}^o$ and let
\[
\mathcal{D}(x)=\{A\subseteq D : x\in \overline{A}^o\}\in\mathscr{P}(D).
\]
Since $\mathcal{D}(x)$ is a family of subsets of $D$, we have
$\abs{\mathcal{D}(x)}\le 2^\sigma$. Now suppose $x,y\in \overline{D}^o$ and
$x\ne y$. We prove that $\mathcal{D}(x)\ne \mathcal{D}(y)$. This yields
\[
\abs{\overline{D}^o}
=\abs{\{\mathcal{D}(x):x\in \overline{D}^o\}}
\le \abs{\mathscr{P}(\mathscr{P}(D))}
=2^{2^\sigma}.
\]

To prove the claim, let $(x_\alpha)$ be a net in $D$ such that
$x_\alpha\xrightarrow{o}x$. Let $y_\alpha\downarrow 0$ be such that
$\abs{x_\alpha-x}\le y_\alpha$ eventually. Although it is not strictly
necessary, we may assume that $(y_\alpha)$ has the same index set as
$(x_\alpha)$ after passing to a subnet; see \cite{MR4533923}. Since
$x\ne y$, there exists $\alpha_1$ such that $\abs{x-y}\not\le y_{\alpha_1}$.
Let
\[
A=\{x_\alpha:\alpha\ge \alpha_1\}.
\]
Then clearly $A\in \mathcal{D}(x)$. We show that $A\notin \mathcal{D}(y)$.
Suppose towards a contradiction that there is a net $(a_\gamma)$ in $A$ with
$a_\gamma\xrightarrow{o}y$. Passing to a subnet, choose $z_\gamma\downarrow
0$ with $\abs{a_\gamma-y}\le z_\gamma$ for all $\gamma$. Then
\[
\abs{x-y}\le \abs{x-a_\gamma}+\abs{y-a_\gamma}\le y_{\alpha_1}+z_\gamma.
\]
Passing to the limit gives $\abs{x-y}\le y_{\alpha_1}$, a contradiction.
Thus $\mathcal{D}(x)\ne \mathcal{D}(y)$, as claimed.
\end{remark}


\begin{proof}[Proof of \Cref{prop:gao-counterexample}]
We begin with the complete Boolean algebra $\mathbb{B}$ of Solovay
\cite{MR186598} with $\abs{\mathbb{B}}\ge \kappa$ that contains a countable
subset $G\subseteq \mathbb{B}$ with the property that there does not exist a
complete Boolean subalgebra $\widetilde{\mathbb{B}}$ satisfying
$G\subseteq \widetilde{\mathbb{B}}\subsetneq \mathbb{B}$.

Let $K$ be the Stone space of $\mathbb{B}$. A Boolean algebra is complete if
and only if its Stone space is extremally disconnected, which is equivalent
to $C(K)$ being order complete. We represent
$G\subseteq \mathbb{B}\subseteq C(K)$ in the standard way by indicator
functions. Since $\abs{\mathbb{B}}\ge \kappa$, the space $C(K)$ has density
character at least $\kappa$, because the elements of $\mathbb{B}$ are all
$1$-separated.

Let $Y$ be the closed sublattice of $C(K)$ generated by $G$ and $\one$.
Then $Y$ is separable. Since $C(K)$ is order complete, every order closed
sublattice of it is order complete and norm closed: norm convergent
sequences have subsequences that converge in order. It therefore suffices to
show that if $X$ is an order closed and norm closed sublattice of $C(K)$
containing $Y$, then $X=C(K)$.

Set
\[
\widetilde{\mathbb{B}}=\mathbb{B}\cap X.
\]
Clearly $G\subseteq \widetilde{\mathbb{B}}$. We claim that
$\widetilde{\mathbb{B}}$ is a complete Boolean subalgebra of $\mathbb{B}$.
Once this is proved, the defining property of $G$ yields
$\widetilde{\mathbb{B}}=\mathbb{B}$. Hence
$\mathbb{B}\subseteq X\subseteq C(K)$. Since $\one\in X$ and indicator
functions of clopen sets separate points of the Stone space,
Stone--Weierstrass gives $X=C(K)$.

It remains to prove that $\widetilde{\mathbb{B}}$ is complete. The Boolean
operations are inherited from $\mathbb{B}$, so only completeness requires an
argument. Let $A\subseteq \widetilde{\mathbb{B}}$, and let $b=\sup A$ in the
complete Boolean algebra $\mathbb{B}$. Viewed inside $C(K)$, the indicator
function of $b$ is the order supremum of the family of indicator functions of
elements of $A$. Since those indicator functions belong to the order closed
sublattice $X$, their supremum also belongs to $X$. Thus $b\in
\widetilde{\mathbb{B}}$, and $\widetilde{\mathbb{B}}$ is complete.
\end{proof}


In everything that follows (including \Cref{sec:fremlin}) we shall apply order adherence only to solid sets. Note that the order adherence of a solid set is also solid. For simplicity, we will introduce some notation -- we leave it as a standard exercise to show that the definition of order adherence below is consistent with the one above.

\begin{definition}
Let $X$ be a vector lattice and let $A\subseteq X$ be solid. The
\emph{order adherence} $A^o$ is the solid hull of the set of all $x\in X_+$
for which there exists an upward directed set $B\subseteq A\cap X_+$ with
$\sup B=x$ in $X$.

Inductively, put
\[
A^{o(0)}:=A,
\qquad
A^{o(m+1)}:=\bigl(A^{o(m)}\bigr)^o.
\]
\end{definition}

Let $\solid(A)$ be the least solid set that contains $A$. Namely, 
\[
\solid(A)=\bigcup_{a\in A}[-|a|,|a|].
\] 
Given a solid set $S$, let $\solidgen(S)$ be the least cardinality of a set $A$ such that
$S=\solid(A)$.

\begin{proposition}\label{prop:solid-large-order-closure}
\leanformalization{\leanstatement{OrderClosures/GaoLeungProblem/Counterexample.lean}{141}{exists_solid_large_orderAdherence}}
For any cardinal $\kappa$ there exists a solid set $S$ with
$\solidgen(S)=\omega$ but $\solidgen({S}^o)\ge \kappa$.
\end{proposition}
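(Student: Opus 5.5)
We will reuse the space of \Cref{prop:gao-counterexample}. Let $\mathbb{B}$ be Solovay's complete Boolean algebra with $\abs{\mathbb{B}}\ge 2^{2^{\kappa}}$, let $K$ be its Stone space, and let $G\subseteq\mathbb{B}$ be the countable set from that proof. Take
\[
S=\solid(A_0),
\]
where $A_0$ is the countable set of all finite Boolean combinations (finite joins of finite meets of elements of $G$ and their complements, together with $\one$), viewed as indicator functions in $C(K)_+$. Then $\solidgen(S)\le\omega$. If $S$ were generated by a finite set it would equal $[-u,u]$ for a single $u$. We will arrange that this is impossible, for instance by making $S$ not order bounded by an element of $S$. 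If $\one\in A_0$ this fails, so instead we take $A_0=\{n\,\one_g : n\in\N,\ g\in\langle G\rangle\}$; then $\sup_n\|a\|=\infty$ on $S$ and $\solidgen(S)=\omega$ exactly.

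The key step is to show that $S^{o(m)}$ eventually contains all of $\mathbb{B}$, while $\solidgen$ controls how fast this can happen. The bound follows by iterating the cardinality estimate of \Cref{rem:gao-cardinality}. If $\solidgen(T)\le\sigma$ with $T=\solid(D)$, then every positive element of $T^o$ is a supremum of an upward directed subset of $T$. Each such supremum lies below a supremum of finite maxima of elements of $D$, so $T^o=\solid(\overline{D'}^o)$, where $D'$ is the set of finite sups of elements of $D$ and has cardinality $\sigma$. Hence $\solidgen(T^o)\le 2^{2^\sigma}$.

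For the lower bound we argue by contradiction. Suppose $\solidgen(S^o)<\kappa$. The plan is to show that the order closure of $S$, namely $\bigcup_{\alpha}S^{o(\alpha)}$ over ordinals, contains every $n\one_b$ with $b\in\mathbb{B}$. To see this, note that $\{b\in\mathbb{B} : \one_b\in \text{order closure}\}$ is closed under complements relative to $\one$, under finite meets by solidity and the lattice structure, and under arbitrary sups by order closedness. It is therefore a complete subalgebra containing $G$, and by Solovay it equals $\mathbb{B}$. The elements $\one_b$ are pairwise $1$-separated, and a solid set generated by fewer than $\lambda$ elements together with the $\one_b$'s it contains has at most $2^{<\lambda}$-ish such $b$'s. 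So some iterate $S^{o(\alpha)}$ must have $\solidgen\ge\kappa$.

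The main obstacle is that this argument only yields a large $\solidgen$ at some transfinite stage, whereas the statement requires it already for $S^o$. To overcome this, I would modify $S$ so that a single adherence step suffices. The first thing to try is to encode, inside the generating set, the whole transfinite hierarchy. Alternatively, and more simply, I would drop Solovay and choose $S$ directly in an order complete $C(K)$, such as $\ell_\infty$ or its Dedekind completion over $\mathcal{P}(\N)/\mathrm{fin}$. Take $S=\solid\{\one_{F}: F\subseteq\N \text{ finite}\}$ in $\ell_\infty(\Gamma)$-like spaces, or more efficiently take $S$ generated by countably many atoms-like projections $\one_{g}$ in the regular-open algebra of $2^{\kappa}$. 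The latter has a countable dense family generating it under countable sups but with $\kappa$ many pairwise disjoint suprema in a single step. Then $S^o$ contains the indicators of all $\kappa$ many regular open sets obtained as sups of subfamilies. A solid set generated by fewer than $\kappa$ elements cannot contain $\kappa$ pairwise disjoint nonzero elements of norm $1$ unless it has an antichain of size $\kappa$ below fewer generators. Since the algebra is ccc, each generator dominates at most countably many disjoint pieces, which gives $\solidgen(S^o)\ge\kappa$. Verifying this ccc/antichain counting is the step I would check most carefully.
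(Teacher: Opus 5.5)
\textbf{There is a genuine gap.} None of your candidate sets $S$ has an order adherence that needs many generators, and the counting step the argument rests on is false.

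In a unital $C(K)$, $\solidgen$ of a solid set has nothing to do with how many pairwise $1$-separated indicators it contains: $\solid\{\one\}=[-\one,\one]$ already contains $\one_b$ for every $b\in\mathbb{B}$. Consequently, whenever $S\subseteq[-\one,\one]$ and $\one\in S^o$, you get $S^o=[-\one,\one]$ and $\solidgen(S^o)=1$. This is the situation in all your variants, because either $\one\in\langle G\rangle$ or $\one$ is the supremum of a dense family of generators. With $A_0=\{n\one_g\}$ it is even worse: $S=S^o=C(K)$, so $\solidgen(S^o)=\omega$.

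It follows that:
\begin{enumerate}[label=\textup{(\alph*)}]
\item the claim that a solid set with fewer than $\lambda$ generators contains only ``$2^{<\lambda}$-ish'' of the $\one_b$'s is false;
\item the transfinite discussion yields nothing;
\item $\solidgen(T^o)\le 2^{2^\sigma}$ is an upper bound, whereas the statement needs a lower bound.
\end{enumerate}

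The regular-open variant has the same defect, and its premises are also inconsistent. For uncountable $\kappa$, $\mathrm{RO}(2^\kappa)$ has no countable dense subset, since the $\pi$-weight of $2^\kappa$ is $\kappa$. Being ccc, it also contains no uncountable family of pairwise disjoint nonzero elements, so ``$\kappa$ many pairwise disjoint suprema'' cannot occur.

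Sequence spaces of a single type do not help either. In $c_0$, $S^o=B_{c_0}$ has $\solidgen\le\abs{c_0}=2^{\aleph_0}$. In $c_0(\Gamma)$, countably many generators have supports inside a countable set, so $S^o$ again lives in a copy of $c_0$.

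\textbf{The missing idea.} You need a non-unital ambient lattice that mixes two directions: a countable one that makes $S$ countably generated, and a $\kappa$-sized one whose elements cannot be dominated in bulk. The paper takes the closed ideal $X\subseteq\ell_\infty(\Gamma\times\omega)$ of all $f$ with $\abs{f(\gamma,n)}\le x(\gamma)+y(n)$ for some $x\in c_0(\Gamma)$ and $y\in c_0(\N)$.
\begin{itemize}
\item Because of the $c_0(\N)$ term, row indicators lie in $X$. Let $S$ be the solid set generated by the countably many $\chi_{\Gamma\times F}$, $F\subseteq\N$ finite. Then $\solidgen(S)=\omega$, and $S^o$ is the unit ball of $X$, since every $0\le f\in X$ with $\norm{f}_\infty\le 1$ is the supremum of its restrictions to finitely many rows.
\item One should use finite unions of rows here. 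With single rows only, every upward directed subset of $S_+$ lives on one row, and $S^o=S$.
\item Because of the $c_0(\Gamma)$ term, $\chi_{\Delta\times\N}\notin X$ for infinite $\Delta$. So although each column indicator $\chi_{\{\gamma\}\times\N}$ lies in $S^o$, every $f\in X$ dominates only finitely many columns.
\end{itemize}
Hence any $A$ with $\solid(A)=S^o$ covers $\Gamma$ by $\abs{A}$ finite sets. This gives $\solidgen(S^o)\ge\abs{\Gamma}\ge\kappa$.
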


\begin{proof}
Consider the following Banach sublattice $X$ of $\ell_\infty(\Gamma\times
\omega)$:
\[
X=\left\{f\in \ell_\infty(\Gamma\times\omega) : \exists x\in c_0(\Gamma)\
\exists y\in c_0(\mathbb{N}) : |f(\gamma,n)|\le x(\gamma)+y(n)\right\}.
\]

For every $n$, the characteristic function $\chi_{\Gamma\times\{n\}}$ belongs
to $X$, because
\[
\chi_{\Gamma\times\{n\}}(\gamma,n)\le 0(\gamma)+\chi_{\{n\}}(n).
\]
Let $S$ be the solid subset of $X$ generated by the countable set
$\{\chi_{\Gamma\times\{n\}} : n\in\mathbb{N}\}$. Since $S$ contains all
finitely supported sequences, every element of $X$ belongs to the order
closure, and even to the order adherence, of $S$.

We now estimate $\solidgen(X)$. Given $f\in X$, notice that
\[
\Gamma_f=\{\gamma\in \Gamma : \chi_{\{\gamma\}\times\mathbb{N}}\le f\}
\]
is a finite subset of $\Gamma$. This is because
$\chi_{\Delta\times\mathbb{N}}\notin X$ if $\Delta$ is infinite. Therefore,
if $X=\solid(A)$, then we must cover $\Gamma$ with $\abs{A}$ many finite
sets. This implies that $\abs{A}\ge \abs{\Gamma}$.
\end{proof}

There is, however, a cardinality bound on the size of the order adherence.

\begin{proposition}\label{prop:cardinalitybound}
\leanformalization{\leanstatement{OrderClosures/GaoLeungProblem/Counterexample.lean}{403}{solid_orderAdherence_cardinality_bound}}
If $S$ is a solid set, then
\[
\abs{{S}^o}\le 2^{|S|}.
\]
\end{proposition}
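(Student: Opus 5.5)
The plan is to encode every element of $S^o$ by a pair of subsets of $S$, and then count the pairs.

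First I handle degenerate cases. If $S=\emptyset$, then $S^o=\emptyset$. If $S=\{0\}$, then $S^o=\{0\}$. Both satisfy the bound. Otherwise $S$ contains some $a\ne 0$. Since $S$ is solid, it contains $t\abs{a}$ for all $t\in[0,1]$, so $S$ is infinite. In that case $2^{\abs{S}}\cdot 2^{\abs{S}}=2^{\abs{S}}$, and it suffices to find an injection of $S^o$ into $\mathscr{P}(S)\times\mathscr{P}(S)$.

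The key observation is that every $z\in S^o$ has $z^+$ and $z^-$ equal to suprema of subsets of $S\cap X_+$. By definition, $\abs{z}\le x$ for some $x=\sup B$, where $B\subseteq S\cap X_+$ is upward directed. Put $u=z^+$, so $0\le u\le x$. By the infinite distributive law valid in any vector lattice,
\[
u=u\wedge x=u\wedge \sup B=\sup_{b\in B}(u\wedge b).
\]
Each $u\wedge b$ satisfies $0\le u\wedge b\le b$ with $b\in S$, so $u\wedge b\in S$ because $S$ is solid. Hence $u=\sup C_u$ with $C_u:=\{u\wedge b: b\in B\}\subseteq S$. The same argument applies to $v=z^-$ with a set $C_v\subseteq S$.

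Now define $\Phi(z)=(C_{z^+},C_{z^-})\in\mathscr{P}(S)\times\mathscr{P}(S)$, fixing one choice of $B$ for each $z$. This map is injective, because $z^+=\sup C_{z^+}$ and $z^-=\sup C_{z^-}$ are determined by their sets, and $z=z^+-z^-$. Therefore
\[
\abs{S^o}\le \abs{\mathscr{P}(S)}^2=2^{\abs{S}}.
\]

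The only genuine step is the observation that passing to the solid hull in the definition of $S^o$ does not enlarge the count. Without it one would have to bound the size of order intervals $[-x,x]$, which can be huge. The obstacle disappears once one notes that solidity of $S$ lets every element below a supremum of elements of $S$ itself be written as a supremum of elements of $S$, via the distributive law. The rest is cardinal arithmetic.
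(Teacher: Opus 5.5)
Your proof is correct, and its key step is the same as the paper's. A positive element lying below a supremum of elements of $S^+$ equals the supremum of its meets with those elements, and these meets lie in $S$ by solidity. Hence $z\mapsto(z^+,z^-)$ injects $S^o$ into $Z\times Z$, where $Z=\{\sup A : A\subseteq S^+\}$.

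Where you differ is in the packaging. The paper never unwinds the one-step definition of $S^o$. Instead it proves that the set $\{x : x^+,x^-\in Z\}$ is order closed. For an order convergent net $z_i\to z$ in $Z$, it produces an increasing net $y_k\uparrow z$ with each $y_k$ below some $z_i$. It then verifies $y_k=\sup\{a\wedge y_k : a\in A\}$ by hand, and uses that a supremum of suprema is the supremum of the union. Since that set contains $S$, the paper's argument actually bounds the cardinality of every iterate $S^{o(m)}$ and of the whole order closure of $S$, not just of $S^o$.

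Your route applies the standard infinite distributive law $u\wedge \sup B=\sup_{b\in B}(u\wedge b)$ directly to the solid-hull description of $S^o$. That law holds in any vector lattice whenever $\sup B$ exists. This avoids the order-convergence bookkeeping and the supremum-of-suprema step. The cost is that it only controls one adherence step, although adding that step would let your argument iterate as well.

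You also make the cardinal arithmetic $2^{\abs{S}}\cdot 2^{\abs{S}}=2^{\abs{S}}$ explicit, by observing that every solid set other than $\emptyset$ and $\{0\}$ is infinite. The paper leaves this tacit.
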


\begin{proof}
It is enough to check that
\[
{S}^o \subseteq
\left\{ x : x^+,x^-\in\left\{\bigvee A : A\subseteq S^+\right\}\right\}.
\]
For this, it is enough to prove that the right-hand side is order closed.
Since taking positive and negative parts are order continuous operations, it
suffices to prove that
\[
Z= \{\bigvee A : A\subseteq S^+\}
\]
is order closed. Suppose $z = o\text{-}\lim z_i$ with $z_i\in Z$. In
particular, we have an increasing net $y_k \leq z_i$ with $\sup y_k = z$.
By taking positive parts, we may suppose that $0\le y_k$.

It remains to show that each $y_k\in Z$. But $0\le y_k\le z_i\in Z$ for some
$i$. So if $z_i = \sup A$, then
\[
y_k = \sup\{a\wedge y_k : a\in A\}.
\]
To justify this, suppose that $w<y_k$ were an upper bound of
$\{a\wedge y_k : a\in A\}$. Then $z_i +w-y_k < z_i$ would be an upper bound of
$A$ below the supremum. Indeed, if $a\in A$, then
\[
a = a - a\wedge y_k + a\wedge y_k \le a - a\wedge y_k + w
\]
and
\[
a-a\wedge y_k = |a\wedge z - a\wedge y_k| \le |z-y_k| = z-y_k.
\]
\end{proof}

We now analyze the number of iterations of order adherence that might be
needed to attain the order closure.

\begin{theorem}\label{thm:solid-iterations}
\leanformalization{\leanstatement{OrderClosures/GaoLeungProblem/Iterations.lean}{612}{solid_sets_require_arbitrarily_many_iterations}}
For every cardinal $\kappa$ with $\aleph_0\leq\kappa$ and every ordinal
$\xi\leq \kappa^+$ there exists a solid set $S$ generated by a set of size
$\kappa$ such that $\xi$ many iterations of order adherence are needed to
reach its order closure.
\end{theorem}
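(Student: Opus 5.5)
The plan is to argue by transfinite induction on $\xi$. For each ordinal $\xi\le\kappa^+$ we construct a vector lattice $X_\xi$ (a sublattice of some $\ell_\infty(T_\xi)$) and a solid set $S_\xi\subseteq X_\xi$ generated by at most $\kappa$ elements. We will arrange that $S_\xi^{o(\zeta)}\subsetneq S_\xi^{o(\zeta+1)}$ for every $\zeta<\xi$ and that $S_\xi^{o(\xi)}$ is order closed. It is convenient to carry along an extra invariant: a distinguished positive element $e_\xi\in S_\xi^{o(\xi)}$ with $e_\xi\notin S_\xi^{o(\zeta)}$ for $\zeta<\xi$, and such that the order closure of $S_\xi$ is the solid hull of $e_\xi$ (or of $\mathbb{R}e_\xi$). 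Then "exactly $\xi$ iterations" reduces to tracking when $e_\xi$ first appears. The base case $\xi=0$ is trivial, and $\xi=1$ is the example of \Cref{prop:solid-large-order-closure}, or simply $c_{00}\subseteq c$ with the constant function as $e_1$.

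\emph{Successor step.} Given $(X_\eta,S_\eta,e_\eta)$, we glue countably many copies, imitating the $\Gamma\times\omega$ construction in the proof of \Cref{prop:solid-large-order-closure}. Let $X_{\eta+1}$ be the sublattice of $\ell_\infty(T_\eta\times\omega)$ consisting of those $f$ with $|f(\cdot,n)|\le a_n e_\eta+g_n$, where $g_n\in X_\eta$ and $a_n\to a$, so that a new unit $e_{\eta+1}=\chi_{T_\eta\times\omega}$ is adjoined. The set $S_{\eta+1}$ is the solid hull of the copies of $S_\eta$ placed in the individual columns; it has $\kappa\cdot\omega=\kappa$ generators. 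Since the columns are disjoint bands, iterated adherence acts coordinatewise on finitely many columns, so $S_{\eta+1}^{o(\zeta)}$ restricted to a column equals $S_\eta^{o(\zeta)}$. The element $e_{\eta+1}$ is the supremum of the increasing sequence $\sum_{n\le N}e_\eta^{(n)}$, whose terms first lie in $S^{o(\eta)}_{\eta+1}$, so $e_{\eta+1}$ enters exactly at step $\eta+1$. The key lemma to check here is the \emph{lower bound}: any upward directed subset of $S^{o(\zeta)}_{\eta+1}$ with supremum $e_{\eta+1}$ must contain elements dominating (a multiple of) $e_\eta$ in some column, which forces $\zeta\ge\eta$.

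\emph{Limit step.} For a limit $\lambda\le\kappa^+$ with $\operatorname{cf}\lambda\le\kappa$, choose a cofinal sequence $(\eta_i)_{i<\operatorname{cf}\lambda}$ and form the analogous "$c_0$-sum plus unit" of the lattices $X_{\eta_i}$. Again $S_\lambda$ is the union of the $S_{\eta_i}$, which has $\kappa\cdot\kappa=\kappa$ generators. By the disjoint-band argument, the unit appears at step $\lambda$ and no earlier.

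\emph{Main obstacle: the case $\xi=\kappa^+$.} Here the cofinality exceeds $\kappa$, so summing all $X_\eta$, $\eta<\kappa^+$, naively uses $\kappa^+$ generators. I would first try to realize the whole construction inside one lattice over a tree of size $\kappa$, with countably splitting levels indexed by the building blocks, so that a single family of $\kappa$ generators serves every $X_\eta$ simultaneously. Iterated adherence would then climb one rank of the tree per step, and the rank would be forced to reach $\kappa^+$. To show that stabilization happens exactly at $\kappa^+$ and not earlier, I expect to combine the rank lower bounds with a counting argument in the spirit of \Cref{prop:cardinalitybound} and \Cref{rem:gao-cardinality}. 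This bookkeeping, getting unboundedly many ranks below $\kappa^+$ from only $\kappa$ generators, is the step I expect to be hardest.
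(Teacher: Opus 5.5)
\textbf{There is a genuine gap, and it is more basic than the $\kappa^+$ case you flag.} The lattices you glue can never need more than one adherence step.

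All your $X_\xi$ are sublattices of some $\ell_\infty(T)$ containing $c_{00}(T)$. This holds for $c$ and for the lattice of \Cref{prop:solid-large-order-closure}, and your column gluing preserves it. In such a lattice the supremum of an upward directed set is its pointwise supremum: an upper bound $h$ exceeding it at $t$ could be replaced by $h-\delta\chi_{\{t\}}$. From this one checks that for every solid $A$
\[
A^{o}=\{x\in X : (1-\varepsilon)|x|\chi_F\in A \text{ for all finite } F\subseteq T \text{ and all } \varepsilon\in(0,1)\},
\]
and applying the formula twice gives $A^{o(2)}=A^{o}$.

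Concretely, your successor step fails under either reading of $S_{\eta+1}$:
\begin{enumerate}[label=\textup{(\alph*)}]
\item If $S_{\eta+1}$ is the union of the column copies of $S_\eta$ (as your count of $\kappa\cdot\omega$ generators indicates), then every upward directed subset of $S_{\eta+1}\cap X_+$ has all its nonzero members in one column. So each iterate is just the union of the column copies of $S_\eta^{o(\zeta)}$, the partial sums $\sum_{n\le N}e^{(n)}_\eta$ never appear, and $e_{\eta+1}$ is never reached.
\item If you add finite cross-column sums, then $S_{\eta+1}$ contains the finitely supported truncations of $e_{\eta+1}$, so $e_{\eta+1}\in S_{\eta+1}^{o}$ already.
\end{enumerate}
So the lower-bound lemma you rely on (``forces $\zeta\ge\eta$'') is false, and the same objection applies to your limit step.

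\textbf{The missing idea} is a lattice in which suprema are not pointwise and the generators are nested along chains rather than disjoint. The paper treats each $\xi<\kappa^+$ directly, without induction. It takes $K$ to be the compact set of $\prec$-monotone $\{0,1\}$-families indexed by the ordinals $\le\omega^\xi+1$, where $\prec$ is the Cantor-normal-form order. $S$ is generated by the at most $\kappa$ projections $\pi_\zeta$ with $\zeta$ a successor. Two facts in $C(K)$ drive the argument:
\begin{enumerate}[label=\textup{(\roman*)}]
\item the supremum of the $\pi_\zeta$ along a $\prec$-chain is $\pi$ of the supremum ordinal;
\item an infinite $\prec$-antichain has infimum $0$.
\end{enumerate}
Fact (ii) makes the sets of $\prec$-minimal dominating indices finite, so a directed family can be tracked along finitely many chains. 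This yields an exact description of every iterate, with $\pi_{\omega^\gamma}$ witnessing that the iterates strictly grow.

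\textbf{The case $\xi=\kappa^+$.} Your plan of realizing everything in a single tree of size $\kappa$ cannot work as described, since a well-founded tree of size $\kappa$ has rank $<\kappa^+$. Also, \Cref{prop:cardinalitybound} gives upper bounds, not lower bounds. The point is that only the generating set must be small, not the lattice. The paper takes the $\ell_\infty$-sum of the $X_\xi$, $\xi<\kappa^+$, lists each generating set as $\{f^\alpha_\xi:\alpha<\kappa\}$ with norm-one elements, and uses the $\kappa$ diagonal generators $f_\alpha=(f^\alpha_\xi)_{\xi<\kappa^+}$. Since the coordinates are projection bands, the $\beta$-th iterate meets the $\xi$-th band exactly in $S_\xi^{o(\beta)}$. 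Hence the iterates keep growing for every $\beta<\kappa^+$.
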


\begin{proof}
As a matter of notation, throughout this proof superscripts will be used to
denote Cartesian products of sets. For example, $I^\omega$ is the set of all
tuples $(i_n)_{n<\omega}$ with $i_n\in I$ for all $n$. But when the base is
$\omega$, it will mean ordinal power, not Cartesian product. For example,
$\omega^2 = \sup\{\omega,\omega+\omega,\ldots\}$ is a countable ordinal, not a
set of tuples.

We define a partial order relation on ordinals. Suppose that we are given
ordinals $\zeta \leq \zeta'$. Let us write them in Cantor normal form
\[
\zeta = \sum_{i=1}^n\omega^{\beta_i}c_i
\qquad \text{and} \qquad
\zeta' = \sum_{i=1}^m\omega^{\gamma_i}d_i.
\]
We say that $\zeta\prec \zeta'$ if $m\leq n$, $\beta_i = \gamma_i$,
$c_i=d_i$ for $i<m$, and either $\gamma_m = \beta_m$ and $c_m< d_m$, or
$\beta_m < \gamma_m < \beta_{m-1}$; if $m=1$ there is no $\beta_{m-1}$, and
the last condition must be interpreted as $\beta_m<\gamma_m$. Some basic
properties are:
\begin{itemize}
\item[P1.] $\preceq$ is a partial order contained in the usual ordinal order
$\leq$.
\item[P2.] For every $\zeta$, the relations $\prec$ and $<$ coincide on
$\{\zeta' : \zeta\prec \zeta'\}$.
\end{itemize}

Now fix an ordinal $\xi$. Consider the compact space
\[
K = \{(x_\zeta)_{\zeta\leq \omega^\xi + 1} : \zeta \prec \zeta' \Rightarrow
x_\zeta \leq x_{\zeta'}\} \subset \{0,1\}^{\omega^\xi + 1}.
\]
Let $\pi_\zeta\in C(K)$ be the projection onto the $\zeta$-th coordinate, and
let $S$ be the solid set of $C(K)$ generated by the $\pi_\zeta$ for
$\zeta$ a successor ordinal. We claim that $\xi$ many iterations of the
order adherence are needed to reach the order closure of $S$. This proves the
statement for $\xi<\kappa^+$. We will deal with the special case
$\xi=\kappa^+$ at the end of the proof.

By the definition of $K$, we have $\pi_\zeta\leq \pi_{\zeta'}$ whenever
$\zeta\prec \zeta'$. Moreover:

\medskip
\noindent
Claim 1: If $Z$ is an infinite pairwise $\prec$-incomparable family of
ordinals, then $\inf\{\pi_\zeta: \zeta\in Z\} =0$.

\smallskip
\noindent
Proof of claim: Suppose $0 \leq f$ is a lower bound. In particular this means
that $f(x)=0$ whenever $x_\zeta = 0$ for some $\zeta\in Z$. Thus, it is
enough to prove that
\[
D = \{x\in K : \exists \zeta\in Z \ x_\zeta = 0 \}
\]
is dense in $K$. So take $y\in K$ and a basic neighborhood of the form
\[
W = \{x\in K : x|_F= y|_F\}
\]
for finite $F\subset \omega^\xi + 1$. We must find $x\in W\cap D$. There
must exist $\zeta_0\in Z$ such that $\zeta'\not\preceq \zeta_0$ for all
$\zeta'\in F$ because elements $\prec$-above $\zeta'$ are linearly ordered by
$\prec$. Define $x$ by declaring $x_\zeta = 0$ if $\zeta\preceq \zeta_0$ and
$x_\zeta = y_\zeta$ otherwise.

\medskip
\noindent
Claim 2: If $Z$ is a nonempty $\prec$-totally ordered family of ordinals with
$\sup(Z) = \alpha$, then
\[
\sup\{\pi_\zeta : \zeta\in Z\} = \pi_\alpha.
\]

\smallskip
\noindent
Proof of claim: It is clear that $\pi_\alpha$ is an upper bound. The
nontrivial case is when the supremum is not a maximum. Let $f\in C(K)$ be
another upper bound. If $\pi_\alpha\not\leq f$, then there exists $x\in K$
such that $f(x) < x_\alpha$. Since $f\geq 0$, we must have $x_\alpha=1$. By
continuity in the product topology, there exists a finite set
$F \subseteq\omega^\xi$ such that $y|_F = x|_F$ implies $f(y) < 1$. Take
$\zeta_0\in Z$ such that $\max(F\cap \alpha) < \zeta_0$. Consider $y\in K$
such that $y_\zeta= 1$ if $\zeta_0\preceq \zeta$ and $y_\zeta=x_\zeta$
otherwise. Then
\[
1 = y_{\zeta_0} \leq f(y) < 1,
\]
a contradiction.

Let $Z_\beta$ be the set of all ordinals $\zeta\leq \omega^\xi$ whose
minimal exponent in Cantor normal form is strictly less than $\beta$ in the
usual ordinal order. Thus $Z_1$ is the set of all successor ordinals. Let
\[
S_\beta = \left\{f\in C(K) : |f|\leq \pi_\zeta \text{ for some }
\zeta\in Z_\beta \right\}.
\]

\medskip
\noindent
Claim 3: $o\text{-}adh^\beta(S) = S_{1+\beta}$.

\smallskip
\noindent
Proof of claim:
We prove it by induction on $\beta$. For $\beta=0$, this is the definition of
$S$. The limit case is trivial. For the successor case, it is enough to prove
that
\[
o\text{-}adh(S_\gamma) = S_{\gamma+1}
\]
for every $\gamma\leq \omega^\xi+1$. The inclusion $[\supseteq]$ follows
from Claim 2 because every $\zeta\in Z_{\gamma+1}$ can be found as a supremum
of a $\prec$-increasing set from $Z_\gamma$. For the inclusion
$[\subseteq]$, suppose that we have an upward directed net
$\{g_i\}\subseteq S_\gamma$ such that $|f| = \sup_i g_i$. For every $i$ we
know that
\[
A_i := \left\{\zeta\in Z_\gamma : g_i\leq \pi_\zeta\right\}\neq \emptyset.
\]
Let $M_i$ be the set of $\prec$-minimal elements of $A_i$. Since it extends
the ordinal order, $\prec$ is a well-founded relation, which implies that
every element of $A_i$ is $\prec$-above an element of $M_i$. Moreover, by
Claim 1, each $M_i$ is finite. If $i\leq j$ then $A_j\subseteq A_i$, and
hence, since $\prec$ is a linear order above any given element,
$|M_j|\leq |M_i|$. Therefore, by restricting from an index onward, we may
suppose that all sets $M_i$ have the same cardinality $k$ and write
$M_i = \{\mu_i^1,\dots,\mu_i^k\}$ in such a way that
$\mu_i^t \preceq \mu_j^t$ when $i\leq j$. But then, since
$g_i\leq \pi_{\mu_i^1}$, we have
\[
|f|\leq \sup_i \pi_{\mu_i^1} = \pi_{\sup_i\mu^1_i}
\]
and $\sup_i\mu^1_i\in Z_{\gamma+1}$.

After Claim 3, it is enough to notice that
$\pi_{\omega^\gamma}\in S_{\gamma+1}\setminus S_\gamma$. It is trivial that
$\pi_{\omega^\gamma}\in S_{\gamma+1}$. If we had
$\pi_{\omega^\gamma}\in S_{\gamma}$, then $\pi_{\omega^\gamma}\leq \pi_\zeta$
for some $\zeta\in Z_\gamma$. We would have $\omega^\gamma\not\preceq \zeta$.
If we consider $x\in K$ with $x_\alpha=0$ if $\alpha\preceq \zeta$ and
$x_\alpha=1$ otherwise, then
\[
\pi_{\omega^\gamma}(x) = 1 < 0 = \pi_\zeta(x),
\]
a contradiction. This finishes the case $\xi<\kappa^+$.

For the case of $\kappa^+$, consider, for every $\xi<\kappa^+$, the solid
set $S_\xi$ generated by a set $\{f^\alpha_\xi : \alpha<\kappa\}$ of elements
of norm one in the Banach lattice $X_\xi$ constructed above, where $\xi$
many iterations of order adherence were needed to reach the order closure.
Take $X$ to be the $\ell_\infty$-sum of all those $X_\xi$, and let $S$ be
the solid set generated by the elements
$f_\alpha = (f_\alpha^\xi)_{\xi<\kappa^+}$.
\end{proof}

The ordinal $\kappa^+$ in \Cref{thm:solid-iterations} is likely not optimal. Although we will not pursue this issue in detail, we include a lemma that might be useful to tackle this question.

\begin{lemma}\label{lem:solid-generated-order-adh}
\leanformalization{\leanstatement{OrderClosures/GaoLeungProblem/Iterations.lean}{643}{solid_generated_orderAdherence}}
If $S = \solid(G)$ for some set $G$ of positive elements, then
\[
S^{o}
=\left\{x : \exists \text{ net } (a_i)\subset G \text{ with }
|x| = o\text{-}\lim_{i\in I} |x|\wedge a_i
= \sup_{i\in I} |x|\wedge a_i\right\}.
\]
\end{lemma}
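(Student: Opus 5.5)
We prove the two inclusions directly from the definition of $S^o$ as the solid hull of suprema of upward directed subsets of $S\cap X_+$. Write $T$ for the right-hand side. Note first that $T$ is solid. Indeed, if $|y|\le |x|$ and $|x|=\sup_i |x|\wedge a_i$, then by distributivity
\[
|y|=|y|\wedge |x|=\sup_i |y|\wedge|x|\wedge a_i=\sup_i |y|\wedge a_i ,
\]
using that infinite distributivity $u\wedge\sup_i v_i=\sup_i u\wedge v_i$ holds in any vector lattice whenever the supremum exists. The order-limit formulation of $T$ needs a little more care. I would read the condition as saying that $(|x|\wedge a_i)_i$ is an increasing net with supremum $|x|$; an increasing net with supremum $|x|$ order converges to $|x|$. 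Under that reading, $|x|\wedge a_i$ increasing passes to $|y|\wedge a_i$. Alternatively, I would interpret the defining condition as ``$|x|=\sup_i|x|\wedge a_i$ for a net with $(|x|\wedge a_i)$ upward directed as a set''. I will fix the definition of $T$ accordingly.

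For $T\subseteq S^o$, take $x\in T$. The set $B=\{|x|\wedge a_i:i\in I\}$ consists of positive elements dominated by $a_i\in G$, so $B\subseteq S\cap X_+$. If $B$ is upward directed with supremum $|x|$, then $|x|\in S^o$, and by solidity so does $x$. If the net is merely order convergent with supremum $|x|$, I would replace $B$ by the set of finite suprema of elements of $B$. The difficulty is that a finite supremum $|x|\wedge(a_{i_1}\vee\dots\vee a_{i_k})$ need not lie in $S=\solid(G)$, since $G$ need not be directed. This is exactly why the statement demands the equality $o\text{-}\lim=\sup$ along a net: it implicitly requires the net $(|x|\wedge a_i)$ to be increasing, and the lemma should be read that way.

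For $S^o\subseteq T$, by solidity of $T$ it suffices to treat $x\ge0$ with $x=\sup B$ for some upward directed $B\subseteq S\cap X_+$. Each $b\in B$ satisfies $b\le a_b$ for some $a_b\in G$. Index the net by $I=B$ with its directed order and set $a_b$ as chosen. Then $b\le x\wedge a_b\le x$, so $\sup_b x\wedge a_b=x$. The main obstacle is monotonicity: $x\wedge a_b$ need not increase in $b$. I would handle this by taking the net $(x\wedge a_b)_{b\in B}$ and checking order convergence directly. We have
\[
0\le x-x\wedge a_b\le x-b\downarrow 0,
\]
so $x\wedge a_b\xrightarrow{o}x$ with dominating net $x-b$. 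Its supremum is $x$, since it is bounded by $x$ and dominates $b$.

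This gives precisely the two properties listed in the definition of $T$: the order limit is $|x|$ and the supremum is $|x|$. It also shows that the right reading of $T$ is ``order limit and supremum'' rather than ``increasing''. With that reading, the converse inclusion goes as follows. Given $x\in T$, let $c_i=|x|\wedge a_i\in S\cap X_+$, and let $z_\beta\downarrow0$ dominate $|x|-c_i$ eventually. Consider the upward directed set $D$ of elements $d\in S\cap X_+$ with $d\le|x|$. This set need not be directed either, so instead I would invoke the earlier Definition's claimed consistency with net order adherence. There $|x|$ is an order limit of the net $(c_i)\subseteq S$, hence $|x|\in\overline{S}^o$, which equals $S^o$ by the standard exercise mentioned before the Definition. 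By solidity, $x\in S^o$. I expect this appeal to the equivalence of the two definitions of order adherence to be the crux.
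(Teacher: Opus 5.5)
Your final argument is essentially the paper's, but it leaves one step unproved, which comes from switching between two readings of the right-hand side $T$.

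\emph{Where it matches the paper.} For $S^o\subseteq T$, your sandwich $b\le x\wedge a_b\le x$ with $x-b\downarrow 0$ is exactly the paper's $y_j\le |x|\wedge a_j\le |x|$. The paper starts from the net definition and extracts $y_j\uparrow|x|$ with $y_j$ eventually below $|x_i|$. Since that works for arbitrary $x$, the paper never passes to a solid hull. For $T\subseteq S^o$, you and the paper both use that $|x|\wedge a_i\in S$ order converges to $|x|$. The paper then splits into $x^\pm$; you use solidity of $S^o$.

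\emph{The right reading.} The intended reading is the one you end with: $|x|\wedge a_i\xrightarrow{o}|x|$, with no monotonicity. The supremum clause is then automatic, since a net lying below $|x|$ that order converges to $|x|$ has supremum $|x|$. The ``increasing'' or ``upward directed'' reading is not just unnatural; it makes the lemma false. In $\ell_\infty$, take $G=\{\chi_{\{1,\dots,n\}\cup A_n}:n\in\N\}$, where the $A_n$ are pairwise disjoint infinite sets of even numbers larger than $n$. Then $\one\in S^o$, because $\chi_{\{1,\dots,n\}}\uparrow\one$. But the elements $\one\wedge g$, $g\in G$, are pairwise incomparable and none of them equals $\one$.

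\emph{The missing step.} You reduce $S^o\subseteq T$ to the case $x=\sup B\ge 0$ ``by solidity of $T$''. Under the correct reading, you verified only the supremum clause of that solidity. The order-convergence clause you checked only for increasing nets. The missing line is: for $|y|\le|x|$,
$0\le |y|-|y|\wedge a_i=(|y|-a_i)^+\le(|x|-a_i)^+=|x|-|x|\wedge a_i$.
Alternatively, skip solidity of $T$ altogether: for $|y|\le x=\sup B$, one has directly $|y|-|y|\wedge a_b=(|y|-a_b)^+\le x-b\downarrow 0$.

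\emph{A simplification for $T\subseteq S^o$.} Your appeal to the equivalence of the two definitions of order adherence is legitimate, but the upward directed set you were looking for is available directly. Let $z_\beta\downarrow 0$ with $|x|-|x|\wedge a_i\le z_\beta$ eventually, and put $y_\beta:=(|x|-z_\beta)^+$. Then $(y_\beta)$ is increasing. For large $i$ it satisfies $y_\beta\le |x|\wedge a_i\le a_i$, so $y_\beta\in S$. Finally $\sup_\beta y_\beta=|x|$, because $|x|-y_\beta=|x|\wedge z_\beta\downarrow 0$. This is the same extraction the paper uses in its $[\subseteq]$ step, and it makes your proof self-contained with respect to the paper's Definition.
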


\begin{proof}
A first observation is that, for a given $x$, the fact that
\[
|x| = o\text{-}\lim_{i\in I} |x|\wedge a_i
= \sup_{i\in I} |x|\wedge a_i
\]
is equivalent to the simultaneous satisfaction of
\[
x^+ = o\text{-}\lim_{i\in I} x^+\wedge a_i
= \sup_{i\in I} x^+\wedge a_i,
\]
\[
x^- = o\text{-}\lim_{i\in I} x^-\wedge a_i
= \sup_{i\in I} x^-\wedge a_i.
\]
For $(\Leftarrow)$ use that $|x| = x^+ + x^-$, and for $(\Rightarrow)$ use
the property checked at the end of \Cref{prop:cardinalitybound}.

Now we proceed to prove the equality of sets. The inclusion $[\supseteq]$
follows from the previous observation. For $[\subseteq]$, suppose
$x=o\text{-}\lim_{i\in I} x_i$ with $x_i\in S$. Then
$|x| = o\text{-}\lim |x_i|$. In particular, there is an increasing net
$\{y_j\}$, which we may suppose consists of positive elements, which is
eventually below $|x_i|$ and has supremum $|x|$. For every $j$, we may thus choose
$a_j$ such that $y_j \leq a_j\in G$.
\end{proof}

\section{A simpler solution to Fremlin's problem on weakly Fatou norms}\label{sec:fremlin}


Recall that a lattice norm $\norm{\cdot}$ on a vector lattice $X$ has the
\emph{Fatou property} if
\[
\norm{x}=\sup_\alpha \norm{x_\alpha}
\]
whenever $0\le x_\alpha\uparrow x$ in $X$, and it is \emph{weakly Fatou} if
there exists $K\ge 1$ such that
\[
\norm{x}\le K \sup_\alpha \norm{x_\alpha}
\]
whenever $0\le x_\alpha\uparrow x$ in $X$.

The goal of this section is to prove the following theorem.

\begin{theorem}\label{thm:fremlin-main}
\leanformalization{\leanstatement{OrderClosures/WeaklyFatou/FinalSpace.lean}{572}{exists_weaklyFatou_not_equivalent_fatou}}
There exists a Banach lattice with a weakly Fatou norm which is not equivalent to any
lattice norm with the Fatou property.
\end{theorem}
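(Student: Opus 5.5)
The plan follows the outline in the introduction. For each $n\in\N$ we build a Banach lattice $X_n$, a space of continuous functions $C(K_n)$ with a modified lattice norm $\norm{\cdot}_n$, satisfying three requirements:
\begin{enumerate}
\item $\norm{\cdot}_n$ is weakly Fatou with constant $2$, independent of $n$;
\item $\norm{f}_\infty\le\norm{f}_n\le C_n\norm{f}_\infty$, so each $X_n$ is complete;
\item some positive $f_n\in\BX^{o(n)}$ has $\norm{f_n}_n\ge 2^n$.
\end{enumerate}

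First we record the key observation. Suppose $\abs{\cdot}$ is any Fatou lattice norm on $X_n$ with $a\norm{x}_n\le\abs{x}\le b\norm{x}_n$. The Fatou property says exactly that the $\abs{\cdot}$-unit ball is order closed under increasing suprema of positive elements, i.e.\ it equals its own order adherence. Since $\BX\subseteq a^{-1}B_{\abs{\cdot}}$, iterating gives $\BX^{o(n)}\subseteq a^{-1}B_{\abs{\cdot}}\subseteq (b/a)\BX$. Hence $b/a\ge 2^n$ by (iii).

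Next we glue the pieces together. Let $X$ be the $c_0$-sum (or $\ell_\infty$-sum) of the normalised pieces $(X_n,\norm{\cdot}_n)$. Suprema of increasing nets in the sum are computed coordinatewise, because band projections onto summands are order continuous. So the glued norm is again weakly Fatou with constant $2$. Now suppose $X$ carried an equivalent Fatou norm. Its restriction to each band $X_n$ is still Fatou, because an increasing net in $X_n$ has the same supremum in $X$ as in $X_n$. It is also equivalent to $\norm{\cdot}_n$ with the same constants, contradicting $b/a\ge 2^n$ for every $n$. This proves \Cref{thm:fremlin-main} once the $X_n$ are built.

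For the construction of $X_n$ I would reuse the ordinal/Boolean machinery of \Cref{thm:solid-iterations}. Take $K$ as there, with $\xi=n$, so that $\pi_\zeta$ with $\zeta$ having minimal Cantor exponent $k$ lies in $S^{o(k)}\setminus S^{o(k-1)}$, where $S$ is generated by the successor coordinates. Define the norm as the Minkowski functional of a solid convex set of the form
\[
U=\operatorname{conv}\,\solid\Bigl(\bigcup_{k\le n}2^{-k}\,\{\pi_\zeta:\zeta\in Z_{k+1}\}\Bigr)\cap B_{C(K)},
\]
or a variant that weights level-$k$ generators by $2^{-k}$. Roughly, $\norm{f}_n$ measures how deep into the adherence hierarchy one must go to dominate $f$, paying a factor $2$ per level. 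Then (ii) is clear. Property (iii) should follow from Claim 3 of \Cref{thm:solid-iterations}: $\pi_{\omega^n}$ is reached after $n$ adherences of (multiples of) $\pi_1$-type generators, yet costs $2^n$ in norm. Claims 1 and 2 of that proof are the tools for computing these suprema.

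The main obstacle is (i): showing that a single increasing supremum never increases the norm by more than a factor $2$. Here convex combinations are the danger. An increasing net of convex combinations of level-$k$ generators could a priori have a supremum needing generators of level $k+2$ or more, which is exactly the ``supremum of suprema'' phenomenon. I would attack this by adapting the minimal-element argument in Claim 3. For each $g_i$ in the net, take the finitely many (by Claim 1) $\prec$-minimal dominating generators. Stabilise their number along a tail, and show their $\prec$-suprema rise by only one level, while convexity weights are controlled by a compactness/limit argument on the finitely many coefficients. If convex hulls make this fail, I would fall back, following Elliott's idea, to replacing the convex hull by a lattice-sum norm $\norm{f}=\inf\{\sum_k 2^k\norm{h_k}_\infty: \abs{f}\le\sum h_k,\ h_k\in\text{level }k\}$. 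There disjointness along incomparable branches makes the one-step estimate tractable.
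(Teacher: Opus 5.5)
\textbf{Verdict: there is a genuine gap.} Your reduction is sound and essentially the paper's, but step (i), the uniform weak Fatou constant, is left as a plan. That step is the technical heart of the proof.

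\textbf{What is right.} A Fatou norm's unit ball equals its own order adherence, so the ratio of equivalence constants on the $n$-th piece must be at least $2^n$. Gluing in a $c_0$-sum and restricting an equivalent Fatou norm to each band works as well as the paper's vectors $z_n$.

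\textbf{A slip in the key observation.} Write $\sigma$ for the Fatou norm, since $\abs{\cdot}$ clashes with absolute value. From $a\norm{x}_n\le\sigma(x)\le b\norm{x}_n$ you get $\BX\subseteq b\,B_\sigma$ and $B_\sigma\subseteq a^{-1}\BX$, not $\BX\subseteq a^{-1}B_\sigma$. The conclusion $b/a\ge 2^n$ survives.

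\textbf{Your candidate norm.} By Riesz decomposition it is $\inf\{\sum_i c_i2^{k_i}:\abs{f}\le\sum_i c_i\pi_{\zeta_i}\}$. This is in essence the paper's $p_n$, with the cylinders $s_t$ on $\N^{H_n}$ replaced by ordinal coordinates, and (iii) is plausible in that setting. However, every $\pi_\zeta$ vanishes at the zero point of $K$. So this functional is infinite at $\one$, and (ii) fails on $C(K)$ unless you add $\one$ as a top generator. The paper's proof actually needs such a top element ($s_\varnothing=\one$).

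\textbf{Why the sketch for (i) does not work.} In Claim 3 each $g_i$ is dominated by a single generator. A point of the unit ball is dominated only by a weighted sum of an unbounded number of generators. Along an increasing sequence, infinitely many different sibling classes can occur. So there is no finite set of minimal elements to stabilise, and no compactness on ``finitely many coefficients''. Claim 1, which says infinite antichains of single generators have infimum $0$, is also too weak here.

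\textbf{What the paper does instead.}
\begin{enumerate}[label=\textup{(\arabic*)}]
\item Reduce nets to sequences by separability (\Cref{prop:separable-reduction}).
\item Take near-optimal representations $w_m$ and pass to a subsequence along which the mass in each sibling band converges.
\item Move the negligible mass of all but finitely many bands onto $\one$, at cost $2^n$ times something small.
\item Thin the sequence so every other band appears in at most one $w_m$.
\item Show the transient parts $T_nv_m$ have no nonzero common lower bound. This needs the stronger fact that finite suprema over pairwise $\parent$-disjoint families have infimum $0$ (\Cref{lem:pi-disjoint}).
\item Replace each recurrent child by its parent, via the upshift of \Cref{lem:upshift-basic}. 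This costs exactly a factor $2$, still dominates, and lands in a fixed finite-dimensional space where the upshifted representations converge to some $u$. Then $y=T_nu$ bounds every $x_j$, because $(x_j-y-\eta\one)^+$ lies below the transient tail.
\end{enumerate}

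The constant $2$ comes from this upshift of the finite recurrent part, not from suprema ``rising by one level''. Your sketch contains no mechanism that bounds the norm of an upper bound by twice the norms in the sequence, so as written it does not prove the theorem.
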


The proof has two parts. For each $n\in\N$ we construct a separable Banach
lattice $X_n$ such that
\begin{enumerate}[label=\textup{(\roman*)}]
\item $X_n$ is weakly Fatou with constant $2$;
\item the $n$-th iterated order adherence of the unit ball of $X_n$ contains an
element of norm $2^n$.
\end{enumerate}

We then take the $c_0$-sum of the spaces $X_n$. If that sum admitted an
equivalent Fatou norm, then every iterated order adherence of its unit ball
would stay uniformly bounded, contradicting \textup{(ii)}.

We will try to make the construction as explicit as possible. We will consider the finite-height
tree
\[
G_n:=\bigcup_{k=0}^n \N^k,
\]
and characteristic functions of simple cylinder sets in
the product space $\N^{H_n}$, where $H_n=G_n\backslash \N^n$ is the set of non-terminal nodes in $G_n$. 
This provides a family of functions in which every parent will be the supremum of its children, while a certain norm will still assign the value $2^{-k}$ to every node on level $k$. Establishing the uniform weak
Fatou estimate for this norm will be the most technical part of the argument.



Let us begin with some simple properties of order adherence.

\begin{lemma}\label{lem:order-basic}
\leanformalization{\leanstatement{OrderClosures/WeaklyFatou/Reductions.lean}{25}{iteratedOrderAdherence_mono_and_scale}}
Let $A,B\subseteq X$ be solid and let $\lambda>0$.
\begin{enumerate}[label=\textup{(\alph*)}]
\item If $A\subseteq B$, then $A^{o(m)}\subseteq B^{o(m)}$ for all $m\ge 0$.
\item One has $(\lambda A)^o=\lambda A^o$, hence
$(\lambda A)^{o(m)}=\lambda A^{o(m)}$ for all $m\ge 0$.
\end{enumerate}
\end{lemma}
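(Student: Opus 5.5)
Both parts come straight from the definition of $A^o$ as the solid hull of the set of suprema of upward directed subsets of $A\cap X_+$. In each case I prove the one-step statement ($m=1$) and then induct on $m$. For the induction to make sense, the sets $A^{o(m)}$ must stay solid, which they do because each is a solid hull by definition.

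For (a), let $A\subseteq B$ be solid. Take $x\in X_+$ that is the supremum of an upward directed $D\subseteq A\cap X_+$. Then $D\subseteq B\cap X_+$, so $x$ belongs to the generating set of $B^o$. The solid hull is monotone, so $A^o\subseteq B^o$. Applying this repeatedly to $A^{o(m)}\subseteq B^{o(m)}$, which are both solid, gives the inclusion for all $m$.

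For (b), first note that $\lambda A$ is solid, since $|y|\le|\lambda a|$ means $|\lambda^{-1}y|\le|a|$. Multiplication by $\lambda>0$ is an order automorphism of $X$. It therefore maps $X_+$ onto $X_+$, carries upward directed sets to upward directed sets, and satisfies $\sup(\lambda D)=\lambda\sup D$. Also $(\lambda A)\cap X_+=\lambda(A\cap X_+)$. So the positive generators of $(\lambda A)^o$ are exactly $\lambda$ times those of $A^o$. Finally, the solid hull commutes with scaling: $\solid(\lambda C)=\lambda\,\solid(C)$, because $[-\lambda|c|,\lambda|c|]=\lambda[-|c|,|c|]$. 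Hence $(\lambda A)^o=\lambda A^o$. The iterated statement follows by induction: $(\lambda A)^{o(m+1)}=((\lambda A)^{o(m)})^o=(\lambda A^{o(m)})^o=\lambda A^{o(m+1)}$, using that $A^{o(m)}$ is solid.

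There is no genuine obstacle here. The only points that need care are checking that solidity is preserved at each stage, so the definition applies, and that suprema are taken in $X$ and commute with positive scaling. Both are immediate.
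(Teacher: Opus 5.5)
Your proposal is correct and follows the same approach as the paper, which just states that both claims are immediate from the definition. You write out that routine check: monotonicity of the generating set and of the solid hull, the fact that scaling by $\lambda>0$ is an order automorphism commuting with suprema and with solid hulls, and induction on $m$ using that each $A^{o(m)}$ is solid.
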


\begin{proof}
These claims are essentially immediate from the definition.
\end{proof}

The following lemma translates the Fatou and weak Fatou properties into statements about how much the unit ball can expand after iterated order adherence.

\begin{lemma}\label{lem:fatou-order}
\leanformalization{
  \leanstatement{OrderClosures/WeaklyFatou/Reductions.lean}{73}{weakFatou_iterated_unitBall}
  and
  \leanstatement{OrderClosures/WeaklyFatou/Reductions.lean}{133}{fatou_iterated_unitBall}}
Let $(X,\norm{\cdot})$ be a normed vector lattice.
\begin{enumerate}[label=\textup{(\alph*)}]
\item If $\norm{\cdot}$ is weakly Fatou with constant $K$, then
\[
\BX^{\,o(m)}\subseteq K^m \BX
\qquad (m\ge 1).
\]
\item If $\norm{\cdot}$ has the Fatou property, then
\[
\BX^{\,o(m)}=\BX
\qquad (m\ge 1).
\]
\end{enumerate}
\end{lemma}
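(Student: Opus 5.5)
Both parts reduce to the case $m=1$ and then iterate, using \Cref{lem:order-basic}.

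\emph{Base case for (a).} Take $x\in \BX^{\,o}$. By definition of order adherence for solid sets, there is $z\in X_+$ with $\abs{x}\le z$ and an upward directed set $B\subseteq \BX\cap X_+$ with $\sup B=z$.
\begin{itemize}
\item Index $B$ by itself, so that $0\le b\uparrow z$ as a net.
\item The weak Fatou estimate then gives $\norm{z}\le K\sup_{b\in B}\norm{b}\le K$.
\item Since the norm is a lattice norm, $\norm{x}=\norm{\abs{x}}\le\norm{z}\le K$, so $x\in K\BX$.
\end{itemize}
Hence $\BX^{\,o}\subseteq K\BX$.

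\emph{Induction for (a).} Suppose $\BX^{\,o(m)}\subseteq K^m\BX$. Then
\[
\BX^{\,o(m+1)}=\bigl(\BX^{\,o(m)}\bigr)^o\subseteq (K^m\BX)^o=K^m\BX^{\,o}\subseteq K^{m+1}\BX .
\]
Here the first inclusion uses monotonicity, \Cref{lem:order-basic}(a) with $m=1$. The equality uses scaling, \Cref{lem:order-basic}(b). The last step is the base case. Note that $\BX$ and all its iterated adherences are solid: $\BX$ is solid because the norm is a lattice norm, and adherences of solid sets are solid.

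\emph{Part (b).} A Fatou norm is weakly Fatou with $K=1$, so (a) gives $\BX^{\,o(m)}\subseteq\BX$. For the reverse inclusion, note $A\subseteq A^o$ for every solid $A$: for $x\in A$, take $z=\abs{x}$ and $B=\{\abs{x}\}$, which lies in $A\cap X_+$ by solidity. This gives $\BX\subseteq\BX^{\,o}\subseteq\BX$, and induction yields $\BX^{\,o(m)}=\BX$ for all $m\ge1$.

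There is no real obstacle. The only points needing care are:
\begin{itemize}
\item The definition of $A^o$ involves the solid hull, which is handled by the lattice-norm monotonicity $\abs{x}\le z\Rightarrow\norm{x}\le\norm{z}$.
\item An upward directed set must be regarded as an increasing net indexed by itself, so that the weak Fatou hypothesis applies.
\end{itemize}
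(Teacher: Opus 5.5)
Your proof is correct and follows the paper's argument. The paper also treats one adherence step by applying the weak Fatou estimate to the upward directed set $B$, using $\norm{x}\le\norm{\sup B}$, and then iterates. You spell out the iteration via the monotonicity and scaling in \Cref{lem:order-basic}, and you get the inclusion $\BX^{\,o(m)}\subseteq\BX$ in (b) from (a) with $K=1$ rather than invoking the Fatou equality directly, which is the same estimate.
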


\begin{proof}
It is enough to treat one order adherence step.

For \textup{(a)}, let $x\in \BX^o$. Choose an upward directed
$B\subseteq \BX\cap X_+$ with $|x|\le \sup B$. By the weak Fatou property, we have
\[
\norm{x}\le \norm{\sup B}\le K \sup_{b\in B}\norm{b}\le K.
\]
Hence, $\BX^o\subseteq K\BX$ and iteration gives the claim.

For \textup{(b)}, the inclusion $\BX\subseteq \BX^o$ is trivial.
Conversely, if $x\in \BX^o$, choose $B$ as above. The Fatou property gives
\[
\norm{\sup B}=\sup_{b\in B}\norm{b}\le 1,
\]
so again $\norm{x}\le 1$. Thus, $\BX^o=\BX$ and the iterated identity
follows.
\end{proof}

In our construction, it will be enough to work with a sequential version of Fatou/weakly Fatou norms. For convenience, we recall the following definitions.

\begin{definition}
Let $(X,\norm{\cdot})$ be a normed vector lattice and let $K\ge 1$.
\begin{enumerate}[label=\textup{(\alph*)}]
\item We say that $K$ is a \emph{weak sequential Nakano constant} for $X$ if
for every increasing sequence $(x_m)$ in $X_+$ with an upper bound in $X$
and with $\sup_m \norm{x_m}\le 1$, and for every $\varepsilon>0$, there
exists an upper bound $y\in X_+$ of the sequence such that
$\norm{y}\le K+\varepsilon$.
\item We say that $K$ is a \emph{weak Nakano constant} for $X$ if the same
statement holds with increasing sequences replaced by upward directed subsets
of the unit ball of $X_+$ having an upper bound in $X$.
\end{enumerate}
\end{definition}

We shall use the following elementary reduction essentially based in \cite[Lemma 2.5]{AzouziRjebT}, which we include for completeness.

\begin{proposition}\label{prop:separable-reduction}
\leanformalization{\leanstatement{OrderClosures/WeaklyFatou/Reductions.lean}{317}{separable_weakSequentialNakano_implies_weakNakano}}
Let $Y$ be a separable normed vector lattice and $K\ge1$ a weak sequential Nakano constant for $Y$. Then $K$ is a weak Nakano constant for $Y$. In particular, $Y$ is weakly Fatou with constant $K$.
\end{proposition}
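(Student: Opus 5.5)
Given an upward directed set $B\subseteq \BX\cap Y_+$ with an upper bound in $Y$ and $\varepsilon>0$, I will extract an increasing sequence $(x_m)$ from $B$ that has the same upper bounds as $B$. The sequence lies in $\BX\cap Y_+$ and is bounded above by any upper bound of $B$. The weak sequential Nakano hypothesis then gives an upper bound $y\in Y_+$ of $(x_m)$ with $\norm{y}\le K+\varepsilon$. Since $(x_m)$ and $B$ have the same upper bounds, $y$ bounds $B$, so $K$ is a weak Nakano constant.

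To build the sequence I will use separability. Fix a countable norm-dense subset $\{d_k\}$ of $Y$. For each $k$, the set $\{(d_k-b)^+ : b\in B\}$ is downward directed, and I look at $\inf_{b\in B}\norm{(d_k-b)^+}$. Choose $b_{k,j}\in B$ with $\norm{(d_k-b_{k,j})^+}$ within $1/j$ of this infimum. Enumerate all the $b_{k,j}$ and use directedness to define $x_m\in B$ recursively, with $x_m$ above $x_{m-1}$ and above the $m$-th chosen element. This produces an increasing sequence in $B$ that eventually dominates each $b_{k,j}$.

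The main obstacle is to show that every upper bound $u$ of $(x_m)$ is also an upper bound of $B$. Fix $b\in B$ and suppose $w:=(b-u)^+\ne0$. I will use the fact that a lattice norm is monotone on positive parts: if $b'\ge b''$ then $(d-b')^+\le (d-b'')^+$, so $\norm{(d-b')^+}\le\norm{(d-b'')^+}$.

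\begin{enumerate}
\item Choose $d_k$ close in norm to $u$, within $\delta$ with $\delta$ small compared to $\norm{w}$.
\item By directedness pick $b'\in B$ with $b'\ge b$ and $b'\ge x_m$ for the relevant $m$.
\item Since $u\ge x_m\ge b_{k,j}$ for large $j$, the inequality $(d_k-b_{k,j})^+\ge (u-b_{k,j})^+-\abs{d_k-u}$ gives $\norm{(d_k-b_{k,j})^+}\ge -\delta$, which is not yet useful.
\item The useful direction is the reverse one: the infimum forces $\norm{(d_k-b_{k,j})^+}\le \norm{(d_k-b')^+}+1/j$, while $(b'-d_k)^+\ge (b-u)^+-\abs{u-d_k}$.
\end{enumerate}

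The catch is that the inequalities in the last step control $(d_k-b)^+$ rather than $(b-d_k)^+$. I therefore expect to reverse the roles and approximate with the infima of $\norm{(b-d_k)^+}$ instead. The intended conclusion is $\norm{(b-d_k)^+}\le 2\delta$ for all $b\in B$ and an appropriate $k$. Combined with $\norm{(b-u)^+}\le\norm{(b-d_k)^+}+\delta$, this forces $(b-u)^+=0$ as $\delta\to0$.

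If this direct route fails, I will fall back on the argument of \cite[Lemma 2.5]{AzouziRjebT}: the norm closure of $B$ is separable, so pick a countable dense subset $C\subseteq B$ and make $(x_m)$ eventually dominate every element of $C$. Then any upper bound $u$ of $(x_m)$ bounds $C$, and for $b\in B$ with $c_n\in C$, $c_n\to b$, we get $(b-u)^+\le\abs{b-c_n}\to0$. This shows $u\ge b$ without using Archimedean-type arguments, since the norm is a lattice norm. This version is cleaner, and I would adopt it.

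Finally, the weakly Fatou conclusion. If $0\le x_\alpha\uparrow x$ with $\sup_\alpha\norm{x_\alpha}\le1$, then $\{x_\alpha\}$ is upward directed in $\BX\cap Y_+$ and bounded by $x$. The weak Nakano property gives an upper bound $y$ with $\norm{y}\le K+\varepsilon$. Since $x$ is the supremum, $x\le y$, so $\norm{x}\le K+\varepsilon$; letting $\varepsilon\to0$ and scaling finishes the proof.
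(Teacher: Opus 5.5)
Your fallback argument is correct and is essentially the paper's proof: take a countable dense subset of $B$ and build an increasing sequence with the same upper bounds as $B$. Then use closedness of the positive cone, which your estimate $(b-u)^+\le\abs{b-c_n}$ makes explicit. The preliminary route via $\norm{(d_k-b)^+}$ is never carried through and should simply be dropped. Choosing $x_m\in B$ by directedness, rather than the paper's finite suprema $c_1\vee\cdots\vee c_m$, has the small advantage that $\norm{x_m}\le 1$ is immediate.
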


\begin{proof}
Let $A\subseteq Y_+$ be upward directed, order bounded, and satisfy
$\sup_{a\in A}\norm{a}\le 1$. Since $Y$ is separable, $A$ has a countable
dense subset $D$. Let $C$ be the set of all finite suprema of elements of
$D$. Then $C$ is countable, upward directed, and has the same upper bounds
as $A$: every upper bound of $A$ clearly bounds $C$, while if $y$ bounds $D$
then $y-d\in Y_+$ for every $d\in D$, hence also $y-a\in Y_+$ for every
$a\in A$ by norm closedness of the positive cone. Enumerating $C$ as
$(c_m)$ and replacing it by the increasing sequence
$d_m:=c_1\vee\cdots\vee c_m$, we obtain an increasing sequence with the same
set of upper bounds as $A$. By hypothesis, for every $\varepsilon>0$ there
is $y\in S$ such that $d_m\le y$ for all $m$ and $\norm{y}\le K+\varepsilon$.
Hence $y$ is an upper bound of $A$. This is exactly $K$ being a weak Nakano constant for $Y$.
\end{proof}

Let us now introduce the notation for the underlying tree and corresponding space of functions on it. We fix $n\in\N$ and put
\[
G_n:=\bigcup_{k=0}^n \N^k,
\qquad
H_n:=\bigcup_{k=0}^{n-1} \N^k.
\]
We will refer to $H_n$ as the set of non-terminal nodes of the tree $G_n$. The unique element of $\N^0$ is denoted by $\varnothing$ and will be called
the root. If $t\in \N^k$ and $m\in\N$, write $t^\frown m\in \N^{k+1}$ for
concatenation. We define the parent map $\parent:G_n\to G_n$ by
\[
\parent(\varnothing):=\varnothing,
\qquad
\parent(t^\frown m):=t.
\]
For $t=(t_1,\dots,t_k)$ and $0\le j\le k$, write
$t|j:=(t_1,\dots,t_j)$, with $t|0=\varnothing$.
Let
\[
I_n:=\N^{H_n},
\]
equipped with the product topology, each copy of $\N$ carrying the discrete
topology. An element $\alpha\in I_n$ can be considered as a function which assigns to each non-terminal node
$u\in H_n$ a natural number $\alpha(u)$.

For $t\in G_n$ of length $k$, define the \textsl{cylinder}
\[
E_t:=\{\alpha\in I_n : \alpha(t|j-1)\le t_j \text{ for } 1\le j\le k\},
\]
and set $E_{\varnothing}:=I_n$. Let $C_b(I_n)$ denote the space of bounded continuous functions on $I_n$ and for $t\in G_n$ let us consider the corresponding characteristic function
\[
s_t:=\chi_{E_t}.
\]

\begin{lemma}\label{lem:basic-tree}
\leanformalization{
  \leanstatement{OrderClosures/WeaklyFatou/FiniteTree.lean}{130}{treeCylinder_isClopen}
  and
  \leanstatement{OrderClosures/WeaklyFatou/FiniteTree.lean}{171}{treeFunction_child_properties}}
For every $t\in G_n$, the following properties hold.
\begin{enumerate}[label=\textup{(\alph*)}]
\item $E_t$ is clopen in $I_n$, hence $s_t\in C_b(I_n)$;
\item if $|t|<n$ and $m\in\N$, then $E_{t^\frown m}\subseteq E_t$, hence
$s_{t^\frown m}\le s_t$;
\item if $|t|<n$, then
\[
s_t=\sup_{m\in\N} s_{t^\frown m}
\]
pointwise on $I_n$.
\end{enumerate}
\end{lemma}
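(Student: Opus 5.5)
Each of the three parts follows directly from the definition of the cylinders, once we record that $E_t$ is described by only finitely many coordinates of $\alpha$. For $t\in\N^k$ with $k\le n$, the conditions defining $E_t$ involve only the coordinates $\alpha(t|0),\dots,\alpha(t|k-1)$. All of these nodes have length at most $k-1\le n-1$, so they lie in $H_n$ and the definition makes sense.

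For \textup{(a)}, we write
\[
E_t=\bigcap_{j=1}^k p_{t|j-1}^{-1}\bigl(\{0,1,\dots,t_j\}\bigr),
\]
where $p_u:I_n\to\N$ denotes the coordinate projection at $u\in H_n$. Each $p_u$ is continuous and $\N$ is discrete, so every set $\{0,\dots,t_j\}$ is clopen in $\N$, and its preimage is clopen in $I_n$. A finite intersection of clopen sets is clopen, and $E_\varnothing=I_n$ is trivially clopen. The indicator of a clopen set is continuous and bounded, so $s_t\in C_b(I_n)$.

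For \textup{(b)}, let $|t|=k<n$ and put $t'=t^\frown m$. Then $t'|j=t|j$ for $j\le k$ and $t'_j=t_j$ for $j\le k$. Hence the defining conditions of $E_{t'}$ consist of all the conditions of $E_t$ together with the single extra condition $\alpha(t)\le m$. This gives
\[
E_{t^\frown m}=E_t\cap\{\alpha:\alpha(t)\le m\}\subseteq E_t,
\]
and therefore $s_{t^\frown m}\le s_t$.

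For \textup{(c)}, the identity just obtained, together with the fact that the sets $\{\alpha:\alpha(t)\le m\}$ increase with $m$ and cover $I_n$, shows that $E_{t^\frown m}$ increases to $E_t$. Pointwise, fix $\alpha\in I_n$. If $\alpha\notin E_t$, then every $s_{t^\frown m}(\alpha)=0=s_t(\alpha)$. If $\alpha\in E_t$, then $s_{t^\frown m}(\alpha)=1$ as soon as $m\ge\alpha(t)$. The only point needing care is the convention for $\N$: if $\N$ starts at $1$ and $\alpha(t)$ could be $0$, one would check the indexing separately, but with $\N$ containing the values of $\alpha$ this is immediate. The statement only asks for the supremum pointwise, so nothing more is required. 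The order supremum in $C_b(I_n)$ could be derived afterwards, since a pointwise supremum that is continuous is also the lattice supremum, but that is not part of this lemma.
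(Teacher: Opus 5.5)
Your proof is correct and follows the paper's own argument. Parts (a) and (b) come straight from the finitely many coordinate conditions defining $E_t$, and (c) comes from the identity $E_{t^\frown m}=E_t\cap\{\alpha:\alpha(t)\le m\}$ together with these sets increasing to $E_t$. Your caveat about the convention for $\N$ is unnecessary: $\alpha(t)$ and the index $m$ both range over $\N$, so $m=\alpha(t)$ is always an admissible choice.
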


\begin{proof}
Part \textup{(a)} is immediate because $E_t$ is defined by finitely many
coordinate inequalities.

Part \textup{(b)} is immediate from the definition.

For \textup{(c)}, note that
\[
E_{t^\frown m}=E_t\cap \{\alpha\in I_n : \alpha(t)\le m\}.
\]
As $m$ increases these sets are increasing, and their union is $E_t$.
Taking characteristic functions gives the claim.
\end{proof}

The following combinatorial fact will play a key role in the construction.

\begin{lemma}\label{lem:finite-cover}
\leanformalization{\leanstatement{OrderClosures/WeaklyFatou/FiniteTree.lean}{224}{treeCylinder_finite_cover}}
Let $t\in G_n$ and let $F\subseteq G_n$ be finite. If
\[
E_t\subseteq \bigcup_{u\in F} E_u,
\]
then there exists $u\in F$ with $E_t\subseteq E_u$.
\end{lemma}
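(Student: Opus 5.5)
The plan is to argue by contraposition: assuming that no $u\in F$ satisfies $E_t\subseteq E_u$, we build one explicit point $\alpha\in E_t$ that lies in no $E_u$ with $u\in F$. The first step is to understand containment between cylinders. Let $k=|t|$. The set $E_t$ constrains exactly the coordinates at the nodes $t|0,\dots,t|(k-1)$, with bounds $\alpha(t|i)\le t_{i+1}$. The remaining coordinates of $\alpha$ are completely free in $E_t$.

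Similarly, for $u\in F$ with $|u|=l$, the set $E_u$ imposes the constraints $\alpha(u|i)\le u_{i+1}$ for $0\le i<l$. Call an index $i<l$ \emph{harmless} for $u$ if $i<k$, $u|i=t|i$ and $t_{i+1}\le u_{i+1}$. Every constraint of $u$ at a harmless index is automatically satisfied on $E_t$. Hence if all indices $i<l$ are harmless, then $E_t\subseteq E_u$. (This also covers $u=\varnothing$, where there are no constraints.) So, under our assumption, each $u\in F$ has some non-harmless index $i_u<|u|$.

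Next we define the point. Let
\[
N:=1+\max\{u_j : u\in F,\ 1\le j\le |u|\}
\]
(take $N=1$ if this set is empty), which is finite because $F$ is finite. Define $\alpha\in I_n$ by $\alpha(t|i):=t_{i+1}$ for $0\le i<k$, and $\alpha(v):=N$ for every other node $v\in H_n$. This is well defined, since the nodes $t|i$ for $i<k$ are distinct (they have different lengths). By construction $\alpha\in E_t$.

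Now fix $u\in F$ and its non-harmless index $i=i_u$; there are two cases.
\begin{enumerate}[label=\textup{(\arabic*)}]
\item If the node $u|i$ is not one of $t|0,\dots,t|(k-1)$, then $\alpha(u|i)=N>u_{i+1}$, so $\alpha\notin E_u$.
\item If $u|i=t|j$ for some $j<k$, then comparing lengths forces $j=i$, so $u|i=t|i$ with $i<k$. Non-harmlessness then means $u_{i+1}<t_{i+1}=\alpha(u|i)$, and again $\alpha\notin E_u$.
\end{enumerate}
Thus $\alpha\in E_t\setminus\bigcup_{u\in F}E_u$, contradicting the hypothesis.

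The main point requiring care is that a single $\alpha$ must defeat all $u\in F$ simultaneously. This works because the two kinds of violations never conflict. On the nodes of $t$'s constrained path we use the largest value that $E_t$ allows. Every other node is unconstrained by $E_t$, so there we use a value exceeding every bound appearing in $F$. Finiteness of $F$ is used exactly to make $N$ finite.
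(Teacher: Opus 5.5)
Your proof is correct and follows essentially the paper's route. Both build a single witness $\alpha\in E_t$ that takes the maximal allowed value $t_{j+1}$ at each proper ancestor $t|j$ and a value exceeding the labels from $F$ elsewhere, and both split into cases according to whether the violated constraint of $u$ sits on $t$'s ancestor path. The differences are cosmetic: the paper always tests the constraint at the parent of $u$ (which is automatically a failing one when $E_t\not\subseteq E_u$) and puts large values only at parents of elements of $F$, while you allow any non-harmless index and use one global bound $N$.
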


\begin{proof}
Assume that $E_t\not\subseteq E_u$ for every $u\in F$. We shall construct
$\alpha\in E_t\setminus \bigcup_{u\in F} E_u$.

For every proper ancestor $t|j$ of $t$ with $0\le j<|t|$, set
\[
\alpha(t|j):=t_{j+1}.
\]
Now let $a\in H_n$ be any non-terminal node which is not a proper ancestor
of $t$ but is a parent of some element of $F$. Since $F$ is finite, the set
\[
M_a:=\{m\in\N : a^\frown m \text{ is an initial segment of some } u\in F\}
\]
is finite. Choose $\alpha(a)>\max M_a$. On all remaining coordinates set
$\alpha(a):=1$.
By construction, $\alpha\in E_t$.
Take $u=(u_1,\dots,u_k)\in F$. Since $E_t\not\subseteq E_u$, one of the
following two cases occurs.

If the parent $u|k-1$ is a proper ancestor of $t$, then necessarily
$k\le |t|$ and $u_i=t_i$ for $1\le i\le k-1$. If $u_k\ge t_k$, then every
point of $E_t$ satisfies the defining inequalities for $E_u$, contrary to
the assumption that $E_t\not\subseteq E_u$. Hence $u_k<t_k$, and therefore
\[
\alpha(u|k-1)=t_k>u_k.
\]
So, $\alpha\notin E_u$.

If the parent $u|k-1$ is not a proper ancestor of $t$, then by construction
$u|k-1$ belongs to the exceptional set of coordinates on which $\alpha$ was
chosen to dominate all child labels appearing in $F$. In particular,
\[
\alpha(u|k-1)>u_k,
\]
so again $\alpha\notin E_u$.

Thus $\alpha\notin E_u$ for every $u\in F$, a contradiction.
\end{proof}

The next lemma will later allow us to ignore pieces supported on
bands that do not recur. To make this precise, we need a definition.

\begin{definition}
Two subsets $A,B\subseteq G_n$ are \emph{$\parent$-disjoint} if
$\parent[A]\cap \parent[B]=\varnothing$.
\end{definition}

\begin{lemma}\label{lem:pi-disjoint}
\leanformalization{\leanstatement{OrderClosures/WeaklyFatou/FiniteTree.lean}{402}{parentDisjoint_treeFunctions_iInf}}
Let $(F_m)$ be a sequence of finite subsets of $G_n$ which are pairwise
$\parent$-disjoint. Then
\[
\inf_{m\in\N}\ \sup_{t\in F_m} s_t = 0
\]
in the vector lattice $C_b(I_n)$.
\end{lemma}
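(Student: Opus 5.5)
The plan is to argue directly with lower bounds, following the density strategy of Claim~1 in the proof of \Cref{thm:solid-iterations}. Put $f_m:=\sup_{t\in F_m}s_t$; since $F_m$ is finite this is a pointwise maximum of finitely many functions, so $f_m\in C_b(I_n)$ by \Cref{lem:basic-tree}(a). Clearly $0$ is a lower bound of $(f_m)$. Suppose $g\in C_b(I_n)$ satisfies $g\le f_m$ for all $m$. Replacing $g$ by $g^+$, which is still continuous and still a lower bound, we may assume $g\ge0$. We must show $g\equiv0$.

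Since $g$ is continuous and $0\le g\le f_m$, it suffices to show that the set
\[
D:=\{\alpha\in I_n:\ f_m(\alpha)=0\ \text{for some } m\}
\]
is dense in $I_n$. Indeed, $g$ vanishes on $D$, so by density and continuity it vanishes everywhere.

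To prove density, fix $\alpha\in I_n$ and a basic neighbourhood $W=\{\beta:\beta|_{H'}=\alpha|_{H'}\}$, where $H'\subseteq H_n$ is finite. Each node of $H'\cup\{\varnothing\}$ lies in at most one of the pairwise disjoint sets $\parent[F_m]$. Hence we may choose $m$ with $\parent[F_m]\cap(H'\cup\{\varnothing\})=\varnothing$. In particular $\varnothing\notin F_m$, since $\parent(\varnothing)=\varnothing$. Moreover, every $u\in F_m$ has positive length and its parent $\parent(u)\in H_n$ lies outside $H'$.

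Now define $\beta\in I_n$ as follows. For each $p\in\parent[F_m]$, let $\beta(p)$ be a natural number larger than the last entry $u_{|u|}$ of every $u\in F_m$ with $\parent(u)=p$; this is possible since $F_m$ is finite. On all other coordinates, set $\beta:=\alpha$. Then $\beta\in W$, because $\beta$ agrees with $\alpha$ on $H'$. Also, for each $u=(u_1,\dots,u_k)\in F_m$ we have $\beta(u|k-1)>u_k$, which violates the last defining inequality of $E_u$. So $\beta\notin E_u$ for every $u\in F_m$, which gives $f_m(\beta)=0$ and $\beta\in D\cap W$.

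The only delicate point is the root. Since $\parent(\varnothing)=\varnothing$ and $s_\varnothing\equiv1$ can never vanish, we must discard the (at most one) index $m$ with $\varnothing\in\parent[F_m]$. Apart from that, every step is a routine verification.
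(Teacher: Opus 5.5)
Your proof is correct and follows essentially the same route as the paper. The paper shows by contradiction that $\bigcap_m U_m$, with $U_m=\bigcup_{t\in F_m}E_t$, has empty interior, which is equivalent to your density of $D$. Both arguments then pick an $m$ whose parent set $\parent[F_m]$ avoids the finitely many fixed coordinates and the root, and raise the coordinates at $\parent[F_m]$ above all child labels so the point leaves every $E_t$ with $t\in F_m$. Your explicit reduction to $g\ge 0$ via $g^+$ and your exclusion of $\varnothing$ from $\parent[F_m]$, rather than just from $F_m$, are harmless extra details.
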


\begin{proof}
Put $g_m:=\sup_{t\in F_m} s_t=\chi_{U_m}$, where
$U_m:=\bigcup_{t\in F_m} E_t$. It suffices to show that
$\bigcap_m U_m$ has empty interior.

Suppose, towards a contradiction, that some non-empty basic open cylinder
$\Omega\subseteq I_n$ is contained in every $U_m$. Then $\Omega$ fixes only
finitely many coordinates, say those in a finite set $Q\subseteq H_n$.

Because the sets $\parent[F_m]$ are pairwise disjoint and finite, we can
choose $m$ such that $\parent[F_m]\cap Q=\varnothing$ and
$\varnothing\notin F_m$.

Choose any $\alpha\in \Omega$. For every $u\in \parent[F_m]$, change the
value of $\alpha(u)$ to a number strictly larger than every child label
appearing above $u$ in $F_m$. Since no such $u$ lies in $Q$, the modified
point still belongs to $\Omega$; denote it by $\beta$.

Now take $t\in F_m$ and let $u:=\parent(t)$. By construction,
$\beta(u)>t_{|t|}$, so $\beta\notin E_t$. Hence $\beta\notin U_m$, which
contradicts $\Omega\subseteq U_m$.
\end{proof}

We are now in position to define a lattice norm with the required properties.
Let $W_n:=c_{00}(G_n)$, the lattice of finitely supported real functions on
$G_n$. For $t\in G_n$, write $e_t$ for the characteristic function of the
singleton $\{t\}$.

For $w\in W_n$, we define the weight
\[
\rho_n(w):=\sum_{t\in G_n} 2^{-|t|}\,\abs{w(t)}.
\]
Also, let us consider the linear map $T_n:W_n\rightarrow C_b(I_n)$ given by
\[
T_n w:=\sum_{t\in G_n} w(t)s_t
\]
for $w\in W_n$.
For $x\in C_b(I_n)$, set
\[
p_n(x):=\inf\{\rho_n(w):w\in W_n^+,\, |x|\le T_n w\}.
\]

\begin{lemma}\label{lem:pn-seminorm}
\leanformalization{\leanstatement{OrderClosures/WeaklyFatou/TreeNorm.lean}{344}{treeLatticeSeminorm}}
$p_n$ is a lattice seminorm on $C_b(I_n)$.
\end{lemma}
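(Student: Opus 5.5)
To show that $p_n$ is a lattice seminorm on $C_b(I_n)$, I will first check that it is well defined and finite, and then verify the seminorm axioms and the lattice monotonicity one by one directly from the definition.

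\emph{Finiteness.} Every $x\in C_b(I_n)$ satisfies $|x|\le \norm{x}_\infty s_\varnothing = T_n(\norm{x}_\infty e_\varnothing)$, because $s_\varnothing=\chi_{I_n}=\one$. Hence the infimum defining $p_n(x)$ is over a nonempty set, and
\[
0\le p_n(x)\le \norm{x}_\infty .
\]

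\emph{Lattice property.} If $|x|\le |y|$, then every $w\in W_n^+$ with $|y|\le T_n w$ also satisfies $|x|\le T_n w$. So the admissible set for $x$ contains the admissible set for $y$, and therefore $p_n(x)\le p_n(y)$. Since the definition depends only on $|x|$, we also have $p_n(x)=p_n(|x|)$.

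\emph{Homogeneity.} For $\lambda\ne0$, the condition $|x|\le T_n w$ is equivalent to $|\lambda x|\le T_n(|\lambda| w)$, because $T_n$ is linear and positive (it maps $W_n^+$ into $C_b(I_n)_+$). Moreover $\rho_n(|\lambda|w)=|\lambda|\rho_n(w)$. So the map $w\mapsto |\lambda|w$ is a bijection between the two admissible sets that scales $\rho_n$ by $|\lambda|$, giving $p_n(\lambda x)=|\lambda|p_n(x)$. The case $\lambda=0$ is trivial: take $w=0$.

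\emph{Triangle inequality.} Fix $\varepsilon>0$ and choose $w_1,w_2\in W_n^+$ with
\[
|x|\le T_n w_1,\qquad |y|\le T_n w_2,\qquad \rho_n(w_i)\le p_n(\cdot)+\varepsilon
\]
for the corresponding element. Then
\[
|x+y|\le |x|+|y|\le T_n(w_1+w_2).
\]
Since $w_1,w_2\ge0$, the weight $\rho_n$ is additive on them, so $\rho_n(w_1+w_2)=\rho_n(w_1)+\rho_n(w_2)$. Hence $p_n(x+y)\le p_n(x)+p_n(y)+2\varepsilon$, and letting $\varepsilon\to0$ gives the triangle inequality.

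None of these steps is a real obstacle. The only point to keep in mind is that $p_n$ is a \emph{seminorm}: definiteness is not claimed here, and it would presumably be dealt with separately, for instance via the bound $p_n\le\norm{\cdot}_\infty$ together with some lower estimate.
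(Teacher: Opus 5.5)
Your proof is correct and follows the paper's argument. The lattice property and homogeneity come directly from the definition, and subadditivity comes from choosing near-optimal $w_1,w_2\in W_n^+$ and using $|x+y|\le T_n(w_1+w_2)$ together with additivity of $\rho_n$ on $W_n^+$. Your additional finiteness check, $|x|\le T_n(\norm{x}_\infty e_\varnothing)$, is left implicit in the paper's proof of this lemma and only appears explicitly in the proof of \Cref{lem:norm-comparison}, which also supplies the lower estimate $\norm{x}_\infty\le 2^n p_n(x)$ you anticipated for definiteness.
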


\begin{proof}
Monotonicity and the lattice property are immediate from the definition.
Positive homogeneity is obvious.

For subadditivity, take $x,y\in X_n$ and $\varepsilon>0$. Choose
$u,v\in W_n^+$ with $|x|\le T_n u$, $|y|\le T_n v$,
$\rho_n(u)<p_n(x)+\varepsilon$, and
$\rho_n(v)<p_n(y)+\varepsilon$.
Then
\[
|x+y|\le |x|+|y|\le T_n(u+v),
\]
so
\[
p_n(x+y)\le \rho_n(u+v)=\rho_n(u)+\rho_n(v)
<p_n(x)+p_n(y)+2\varepsilon.
\]
Letting $\varepsilon\downarrow 0$ completes the proof.
\end{proof}

\begin{lemma}\label{lem:norm-comparison}
\leanformalization{\leanstatement{OrderClosures/WeaklyFatou/TreeNorm.lean}{390}{treeSeminorm_norm_comparison}}
For every $x\in C_b(I_n)$ one has
\[
p_n(x)\le \|x\|_\infty \le 2^n p_n(x).
\]
\end{lemma}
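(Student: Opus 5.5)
Both inequalities follow directly from the definition of $p_n$ as an infimum over majorants $T_n w$ with $w\in W_n^+$, using that each $s_t$ is a $\{0,1\}$-valued function.

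\emph{Lower bound $p_n(x)\le \|x\|_\infty$.} We use the root. Since $s_\varnothing=\chi_{I_n}=\one$, the element $w=\|x\|_\infty e_\varnothing\in W_n^+$ satisfies
\[
T_n w=\|x\|_\infty\one\ge |x|.
\]
Its weight is $\rho_n(w)=2^{0}\|x\|_\infty=\|x\|_\infty$. Taking the infimum over admissible $w$ gives $p_n(x)\le\|x\|_\infty$.

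\emph{Upper bound $\|x\|_\infty\le 2^n p_n(x)$.} Take any admissible $w\in W_n^+$, so that $|x|\le T_n w$. Since $0\le s_t\le 1$ for every $t$, we have pointwise
\[
|x|\le \sum_{t} w(t)s_t\le \sum_t w(t).
\]
Every node has $|t|\le n$, so $2^{-|t|}\ge 2^{-n}$, and therefore
\[
\sum_t w(t)\le 2^n\sum_t 2^{-|t|}w(t)=2^n\rho_n(w).
\]
Combining these gives $\|x\|_\infty\le 2^n\rho_n(w)$. Taking the infimum over all admissible $w$ yields $\|x\|_\infty\le 2^n p_n(x)$.

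This also shows that the infimum defining $p_n$ is over a nonempty set, so $p_n$ is finite. There is no real obstacle. The only points to check are that $s_\varnothing=\one$, which holds because $E_\varnothing=I_n$, and that nodes of $G_n$ have length at most $n$.
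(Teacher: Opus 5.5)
Your proposal is correct and follows the paper's proof essentially step for step. You use the root witness $\|x\|_\infty e_\varnothing$ with $s_\varnothing=\one$ for the first inequality, and for the second you bound $T_n w$ pointwise by $\sum_t w(t)\le 2^n\rho_n(w)$ and then take the infimum over admissible $w$.
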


\begin{proof}
Because $|x|\le \|x\|_\infty s_{\varnothing}$ and
$\rho_n(e_{\varnothing})=1$, the first inequality is immediate.

For the second, let $w\in W_n^+$ satisfy $|x|\le T_n w$. Since each $s_t$ is
$\{0,1\}$-valued, we have
\[
\|x\|_\infty\le \|T_n w\|_\infty
\le \sum_{t\in G_n} w(t)
\le 2^n \sum_{t\in G_n} 2^{-|t|} w(t)
=2^n \rho_n(w).
\]
Taking the infimum over all such $w$ proves the claim.
\end{proof}

The next lemma gives the exact values of $p_n$ on the tree functions.

\begin{lemma}\label{lem:exact-basis}
\leanformalization{\leanstatement{OrderClosures/WeaklyFatou/TreeNorm.lean}{455}{treeSeminorm_exact_basis}}
Let $t\in G_n$. If $w\in W_n^+$ satisfies $s_t\le T_n w$, then
$\rho_n(w)\ge 2^{-|t|}$. Consequently,
\[
p_n(s_t)=2^{-|t|}.
\]
\end{lemma}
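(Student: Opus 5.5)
The plan is to evaluate $T_n w$ at a single well-chosen point $\alpha\in E_t$ at which the only cylinders from the support of $w$ that contain $\alpha$ are those containing all of $E_t$. At such a point the inequality $s_t\le T_nw$ forces enough mass on nodes of length at most $|t|$. The upper bound $p_n(s_t)\le 2^{-|t|}$ is immediate from $s_t=T_n e_t$ and $\rho_n(e_t)=2^{-|t|}$. So the whole content is the lower bound $\rho_n(w)\ge 2^{-|t|}$ whenever $w\in W_n^+$ and $s_t\le T_nw$.

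Let $F=\supp w$, which is finite, and split it as $F=F_1\cup F_2$, where $F_1=\{u\in F: E_t\subseteq E_u\}$ and $F_2=F\setminus F_1$.

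First, I would show that every $u\in F_1$ satisfies $|u|\le |t|$. Suppose $|u|=k>|t|$. The defining inequality for $E_u$ at level $k$ involves the coordinate $\alpha(u|k-1)$, and $u|k-1$ has length $k-1\ge|t|$, so it is not a proper ancestor of $t$. Hence this coordinate is unconstrained on $E_t$. Setting it larger than $u_k$, with the other coordinates chosen as for a point of $E_t$ (say equal to the corresponding entries of $t$ on ancestors of $t$), gives a point of $E_t\setminus E_u$. This contradicts $E_t\subseteq E_u$.

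Second, I would reuse the construction in the proof of \Cref{lem:finite-cover}, applied to the finite family $F_2$. That proof only uses the hypothesis $E_t\not\subseteq E_u$ for each $u$ in the family, and it produces $\alpha\in E_t$ with $\alpha\notin E_u$ for every $u\in F_2$. If $F_2=\varnothing$, any $\alpha\in E_t$ works. Evaluating $s_t\le T_nw$ at $\alpha$ gives
\[
1=s_t(\alpha)\le \sum_{u\in F} w(u)s_u(\alpha)=\sum_{u\in F_1} w(u)s_u(\alpha)\le\sum_{u\in F_1} w(u).
\]
Therefore
\[
\rho_n(w)\ge\sum_{u\in F_1}2^{-|u|}w(u)\ge 2^{-|t|}\sum_{u\in F_1}w(u)\ge 2^{-|t|},
\]
using $|u|\le|t|$ on $F_1$ from the first step. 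Taking the infimum over all admissible $w$ then gives $p_n(s_t)=2^{-|t|}$.

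The main point needing care is the second step: one must check that the construction in \Cref{lem:finite-cover} really works for an arbitrary finite family of nodes $u$ with $E_t\not\subseteq E_u$. In its case analysis the case $E_t\subseteq E_u$ never arises, so I expect this to go through verbatim; otherwise I would repeat the construction explicitly.
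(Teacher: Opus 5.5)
Your proof is correct and is essentially the paper's argument. The paper moves the coefficients of every $u\ne t$ with $E_t\subseteq E_u$ onto $t$, forming $w'$ with $w'(t)=\sum_{u\in F_1}w(u)$, and then applies \Cref{lem:finite-cover} to $\supp w'\setminus\{t\}$; this is exactly your split $F=F_1\cup F_2$, with the same bound $|u|\le|t|$ on $F_1$. Your closing worry is unnecessary: the contrapositive of the statement of \Cref{lem:finite-cover}, applied to $F_2$, already gives $E_t\not\subseteq\bigcup_{u\in F_2}E_u$, so you need not revisit its construction.
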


\begin{proof}
The upper bound $p_n(s_t)\le 2^{-|t|}$ is given by $w=e_t$.

For the reverse inequality, let $w\in W_n^+$ satisfy $s_t\le T_n w$, and let
\[
U:=\{u\in \supp w : E_t\subseteq E_u,\ u\ne t\}.
\]
Move all coefficients supported on $U$ down to $t$:
\[
w':=
w+\Bigl(\sum_{u\in U} w(u)\Bigr)e_t-\sum_{u\in U} w(u)e_u.
\]
If $\alpha\in E_t$, then $E_t\subseteq E_u$ for every $u\in U$, so
$T_n w'(\alpha)=T_n w(\alpha)\ge 1$. Thus $s_t\le T_n w'$. Moreover,
every $u\in U$ satisfies $|u|\le |t|$. Indeed, if $|u|>|t|$, then the last
defining inequality for $E_u$ involves a coordinate which is not constrained
by membership in $E_t$, so $E_t\subseteq E_u$ is impossible. Hence
$2^{-|t|}\le 2^{-|u|}$ and $\rho_n(w')\le \rho_n(w)$.

No strict upper bound of $t$ remains in $\supp w'$. Therefore,
\Cref{lem:finite-cover}, applied to the finite family
$\supp w'\setminus\{t\}$, shows that
\[
E_t\not\subseteq \bigcup_{u\in \supp w',\,u\ne t} E_u.
\]
Choose $\alpha\in E_t$ outside that union. Evaluating at $\alpha$ gives
\[
1=s_t(\alpha)\le T_n w'(\alpha)=w'(t),
\]
because every term other than $t$ vanishes at $\alpha$. Hence
\[
\rho_n(w)\ge \rho_n(w')\ge 2^{-|t|} w'(t)\ge 2^{-|t|}.
\]
This proves the claim.
\end{proof}

Let $X_n$ be the closed sublattice of $C_b(I_n)$ generated by the
countable family $\{s_t:t\in G_n\}$, equipped with the restriction of $p_n$.

\begin{corollary}\label{cor:Yn-basic}
\leanformalization{\leanstatement{OrderClosures/WeaklyFatou/TreeNorm.lean}{625}{component_basic}}
$(X_n,p_n)$ is a separable Banach lattice. Moreover,
$p_n(\one)=1$ and every terminal node $t\in \N^n$ satisfies
$p_n(s_t)=2^{-n}$.
\end{corollary}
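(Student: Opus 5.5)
Three things need checking: that $p_n$ is a norm on $X_n$, that $X_n$ is a separable norm-closed sublattice, and the two numerical values. We use \Cref{lem:pn-seminorm}, \Cref{lem:norm-comparison} and \Cref{lem:exact-basis}.

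First, $p_n$ is a genuine norm on all of $C_b(I_n)$. By \Cref{lem:pn-seminorm} it is a lattice seminorm. By \Cref{lem:norm-comparison} we have $\|x\|_\infty\le 2^n p_n(x)$, so $p_n(x)=0$ forces $x=0$. The same lemma gives $p_n\le\|\cdot\|_\infty\le 2^np_n$, so $p_n$ is equivalent to the sup norm. Hence $(C_b(I_n),p_n)$ is complete, since $(C_b(I_n),\|\cdot\|_\infty)$ is a Banach lattice and completeness transfers across equivalent norms.

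Next, $X_n$ is by definition the closed sublattice generated by $\{s_t\}$. Since the two norms are equivalent, "closed" means the same thing for either norm. A closed sublattice of a Banach lattice is a Banach lattice with the restricted norm, and the restriction of a lattice norm is still a lattice norm, because $|x|\le|y|$ is computed the same way in $X_n$ as in $C_b(I_n)$. For separability, note that $G_n$ is countable. Finite lattice-linear combinations with rational coefficients of the $s_t$ form a countable set. This set is dense in the sublattice generated by the $s_t$, because the lattice operations and linear operations are norm continuous. Its closure is therefore $X_n$, so $X_n$ is separable.

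Finally, the values. We have $\one=s_\varnothing$, since $E_\varnothing=I_n$. Applying \Cref{lem:exact-basis} with $|t|=0$ gives $p_n(\one)=1$, and applying it to a terminal node $t\in\N^n$ gives $p_n(s_t)=2^{-n}$. One subtlety is that $p_n$ is defined as an infimum over dominations in $C_b(I_n)$, not in $X_n$, but the norm on $X_n$ is defined as the restriction of $p_n$, so this causes no problem.

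No step is a real obstacle. The only point needing care is the density argument for separability, which amounts to checking that the sublattice generated by a countable set is the union of the closures under finitely many operations, and that rational coefficients suffice by continuity.
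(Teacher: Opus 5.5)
Your proposal is correct and follows the paper's proof: completeness comes from the norm equivalence in \Cref{lem:norm-comparison}, since $X_n$ is closed in the supremum norm; separability comes from the countable generating family; and both values follow from \Cref{lem:exact-basis}, using $s_\varnothing=\one$. You also spell out details the paper leaves implicit, namely the nondegeneracy of $p_n$ and the rational-coefficient density argument.
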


\begin{proof}
Separability is clear from the countable generating family, and completeness
follows from \Cref{lem:norm-comparison} because $X_n$ is closed in the
supremum norm. The formulae for $p_n(\one)$ and the terminal nodes are
immediate from \Cref{lem:exact-basis}.
\end{proof}


We now focus on the technical part of estimating the weak Fatou constant of $(X_n,p_n)$.

For each non-terminal node $u\in H_n$, let
\[
C_u:=\{u^\frown m : m\in\N\},
\]
and let $B_u\subseteq W_n$ be the band of functions supported on $C_u$.
Also put $B_{\varnothing}:=\R e_{\varnothing}$. Then
\[
W_n=B_{\varnothing}\oplus \bigoplus_{u\in H_n} B_u
\]
as an algebraic $\ell^1$-sum with respect to $\rho_n$.

For each such band $B$, let $P_B:W_n\to B$ denote the coordinate
projection. If $\Lambda$ is a finite family of these bands, write
\[
P_\Lambda:=\sum_{B\in\Lambda} P_B.
\]

Define the upshift $S_n:W_n\to W_n$ by
\[
S_n e_{\varnothing}:=e_{\varnothing},
\qquad
S_n e_{t^\frown m}:=e_t.
\]

\begin{lemma}\label{lem:upshift-basic}
\leanformalization{\leanstatement{OrderClosures/WeaklyFatou/Bands.lean}{279}{treeUpshift_basic}}
For every $w\in W_n^+$ one has
\[
\rho_n(S_n w)\le 2\rho_n(w)
\qquad\text{and}\qquad
T_n w\le T_n S_n w.
\]
\end{lemma}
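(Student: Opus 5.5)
Both inequalities follow from linearity, so we check them on basis vectors $e_t$ and extend using $w\ge 0$. The map $S_n$ is defined on the basis $\{e_t:t\in G_n\}$ of $W_n$ and extended linearly, so $S_n w=\sum_{t\in G_n} w(t)\,e_{\parent(t)}$, with $\parent(\varnothing)=\varnothing$. In particular $S_n$ is positive.

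For the weight estimate, note that $\rho_n$ is additive on $W_n^+$. For $w\ge0$ it therefore suffices to show $\rho_n(S_n e_t)\le 2\rho_n(e_t)$ for every $t$. For $t=\varnothing$ both sides equal $1$ and $2$ respectively. For $t=u^\frown m$ with $|t|=k\ge1$, we have $\rho_n(S_ne_t)=\rho_n(e_u)=2^{-(k-1)}=2\cdot 2^{-k}=2\rho_n(e_t)$. Multiplying by the coefficients $w(t)\ge0$ and summing gives
\[
\rho_n(S_nw)=\sum_{t} w(t)\,2^{-|\parent(t)|}\le 2\sum_t w(t)\,2^{-|t|}=2\rho_n(w).
\]
The only care needed is that distinct $t$ may share a parent. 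This causes no cancellation, since all coefficients are nonnegative and $\rho_n$ is additive on the positive cone.

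For the domination $T_nw\le T_nS_nw$, we similarly reduce to $T_ne_t\le T_nS_ne_t$, that is, $s_t\le s_{\parent(t)}$. For $t=\varnothing$ this is an equality. For $t=u^\frown m$ it is exactly \Cref{lem:basic-tree}(b), which gives $s_{u^\frown m}\le s_u$. Since $T_n$ and $S_n$ are linear and positive, we get
\[
T_nw=\sum_t w(t)s_t\le \sum_t w(t)s_{\parent(t)}=T_nS_nw.
\]

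There is no real obstacle here. The only points to watch are that the root is fixed by $S_n$, which is what makes the constant $2$ rather than $1$ harmless, and that positivity of $w$ is used in both estimates.
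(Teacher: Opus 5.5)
Your proof is correct and follows essentially the same route as the paper. The weight estimate comes from $2^{-|\parent(t)|}=2\cdot 2^{-|t|}$ for $t\ne\varnothing$ (the root contributes only $1\le 2$), and the domination reduces by linearity and positivity to $s_t\le s_{\parent(t)}$, which is \Cref{lem:basic-tree}(b). Your remark that siblings sharing a parent cause no trouble, because $\rho_n$ is additive on $W_n^+$, is a detail the paper leaves implicit.
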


\begin{proof}
The first inequality follows from
$2^{-|\parent(t)|}=2\cdot 2^{-|t|}$ for $t\ne\varnothing$.

For the second, it is enough to check it on the basis vectors
$e_{t^\frown m}$. There it reads $s_{t^\frown m}\le s_t$, which is
\Cref{lem:basic-tree}\textup{(b)}. The claim follows by linearity and
positivity.
\end{proof}

Let $(x_m)$ be an increasing sequence in $X_n^+$ with $p_n(x_m)\le 1$ for
all $m$. Fix $\varepsilon>0$.

For each $m$ choose $w_m\in W_n^+$ such that
\[
x_m\le T_n w_m
\qquad\text{and}\qquad
\rho_n(w_m)<1+\varepsilon/4.
\]
Since $(x_m)$ is increasing, every subsequence of $(T_n w_m)$ still dominates the
whole sequence $(x_m)$.

\begin{lemma}\label{lem:sharp-subsequence}
\leanformalization{\leanstatement{OrderClosures/WeaklyFatou/Bands.lean}{369}{tree_sharp_subsequence}}
After passing to a subsequence, we may suppose that for every band
$B\in\{B_{\varnothing}\}\cup\{B_u:u\in H_n\}$ the sequence
$\bigl(\rho_n(P_B w_m)\bigr)$ converges.
\end{lemma}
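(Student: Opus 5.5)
The plan is a Bolzano--Weierstrass argument combined with a Cantor diagonal extraction over the countable family of bands. First I will check that the index set of bands, $\{B_{\varnothing}\}\cup\{B_u:u\in H_n\}$, is countable. This holds because $H_n=\bigcup_{k=0}^{n-1}\N^k$ is a finite union of countable sets, so I can enumerate the bands as $B^{(1)},B^{(2)},\dots$. For every band $B$ and every $m$, the band projections are coordinate restrictions and $\rho_n$ is an $\ell^1$-type weight, so
\[
0\le \rho_n(P_B w_m)\le \rho_n(w_m)<1+\varepsilon/4 .
\]
Hence each real sequence $\bigl(\rho_n(P_B w_m)\bigr)_m$ lies in the compact interval $[0,1+\varepsilon/4]$.

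Next comes the diagonal extraction. Bolzano--Weierstrass gives an infinite set $N_1\subseteq\N$ along which $\rho_n(P_{B^{(1)}}w_m)$ converges. Inductively, I choose an infinite $N_{j+1}\subseteq N_j$ along which $\rho_n(P_{B^{(j+1)}}w_m)$ converges. Let $m_j$ be the $j$-th element of $N_j$; these can be taken strictly increasing. The diagonal subsequence $(w_{m_j})_j$ is eventually contained in each $N_i$, so for every band the sequence $\rho_n(P_{B^{(i)}}w_{m_j})$ converges as $j\to\infty$.

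Finally, I must check that passing to this subsequence costs nothing in the surrounding argument. I will replace $(x_m)$ by $(x_{m_j})$ and $(w_m)$ by $(w_{m_j})$. The new $x$-sequence is still increasing and has the same set of upper bounds, since every $x_m$ lies below some $x_{m_j}$. We still have $x_{m_j}\le T_n w_{m_j}$ and $\rho_n(w_{m_j})<1+\varepsilon/4$. As already remarked before the statement, every subsequence of $(T_n w_m)$ still dominates the whole sequence $(x_m)$.

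No step here is a real obstacle. The only point needing attention is the countability of the family of bands, which is what makes the diagonal procedure possible; with uncountably many bands this would fail.
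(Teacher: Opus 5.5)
Your proposal is correct and is the paper's proof: countably many bands, the uniform bound $\rho_n(P_B w_m)\le\rho_n(w_m)<1+\varepsilon/4$, and a Cantor diagonal extraction. The extra details you supply are correct. These are the strict monotonicity of the diagonal indices and the fact that the subsequence of $(x_m)$ keeps the same upper bounds. They simply spell out what the paper calls ``a standard diagonal argument.''
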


\begin{proof}
There are only countably many bands, and each projected norm is bounded by
$\rho_n(w_m)$. A standard diagonal argument gives the claim.
\end{proof}

Fix such a subsequence and write
\[
\lambda_B:=\lim_{m\to\infty} \rho_n(P_B w_m)
\]
for each band $B$. Since $\rho_n$ is the $\ell^1$-sum of the band norms, we have
\[
\sum_B \lambda_B \le \liminf_{m\to\infty}\rho_n(w_m)\le 1+\varepsilon/4.
\]

\begin{lemma}\label{lem:trim}
\leanformalization{\leanstatement{OrderClosures/WeaklyFatou/Bands.lean}{420}{tree_trim}}
There exists a new sequence $(\widetilde w_m)$ in $W_n^+$ such that:
\begin{enumerate}[label=\textup{(\roman*)}]
\item $x_m\le T_n \widetilde w_m$ for all $m$;
\item $\rho_n(\widetilde w_m)<1+\varepsilon/2$ for all $m$;
\item only finitely many bands in the decomposition given above meet $\supp \widetilde w_m$ for infinitely
many values of $m$.
\end{enumerate}
\end{lemma}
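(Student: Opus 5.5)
The plan is to produce $\widetilde w_m$ supported in a \emph{fixed} finite family $\Lambda$ of bands, with $B_{\varnothing}\in\Lambda$. Then \textup{(iii)} holds trivially. Each $\widetilde w_m$ will be a Λ-projection of some later $w_{m'}$ plus a small multiple of $e_{\varnothing}$, and the mass of $w_{m'}$ outside $\Lambda$ will simply be discarded.

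\textbf{Step 1 (choice of $\Lambda$).} Fix $\delta>0$ with $2^{n}\delta<\varepsilon/16$. Since $\sum_B\lambda_B<\infty$, choose a finite family $\Lambda$ of bands containing $B_{\varnothing}$ with $\sum_{B\notin\Lambda}\lambda_B<\delta$. Only finitely many coordinates of $I_n$ matter for functions of the form $T_nP_\Lambda w$: namely the nodes $u$ with $B_u\in\Lambda$, together with the ancestors of nodes in those bands.

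\textbf{Step 2 (reduction to finitely many coordinates).} Each $x_m$ lies in $X_n$, the closed sublattice generated by the $s_t$. So we can pick a finite lattice-linear expression $y_m$ in finitely many $s_t$ with $\|x_m-y_m\|_\infty<\eta$, where $\eta:=\varepsilon/16$. Then $x_m\le y_m^+ +\eta\one$, and $y_m$ depends only on a finite set $Q_m\subseteq H_n$ of coordinates. Let $\mathcal Q_m$ be the finite set of bands $B_u$ with $u\in Q_m$, together with the bands containing nodes relevant to $y_m$, and put $\Lambda_m:=\Lambda\cup\mathcal Q_m$.

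By the convergence in \Cref{lem:sharp-subsequence}, we can choose $m'\ge m$ so large that
\[
\rho_n(P_{\Lambda_m\setminus\Lambda}w_{m'})<\delta
\quad\text{and}\quad
\rho_n(P_\Lambda w_{m'})<\sum_{B\in\Lambda}\lambda_B+\varepsilon/16 .
\]
Recall that $x_m\le x_{m'}\le T_nw_{m'}$. We decompose
\[
w_{m'}=P_\Lambda w_{m'}+P_{\Lambda_m\setminus\Lambda}w_{m'}+R ,
\]
where $R$ is supported in bands outside $\Lambda_m$. The middle term satisfies
\[
T_nP_{\Lambda_m\setminus\Lambda}w_{m'}\le 2^n\delta\,\one
\]
by the sup-norm estimate of \Cref{lem:norm-comparison}.

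\textbf{Step 3 (killing $R$; the main obstacle).} The key claim is that
\[
x_m\le T_n\bigl(P_\Lambda w_{m'}\bigr)+(2^n\delta+2\eta)\one .
\]
For this, fix $\alpha\in I_n$ and modify it only on the coordinates $u\notin Q_m\cup\{\text{coordinates relevant to }\Lambda\}$ that are parents of nodes in $\supp R$. There are finitely many such coordinates, and we raise each above every child label occurring in $\supp R$, exactly as in the proof of \Cref{lem:pi-disjoint}. Call the modified point $\beta$. Then $T_nR(\beta)=0$. On the other hand, $y_m$ and $T_nP_\Lambda w_{m'}$ take the same values at $\alpha$ and $\beta$. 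Hence
\[
y_m(\alpha)=y_m(\beta)\le x_m(\beta)+\eta\le T_nP_\Lambda w_{m'}(\alpha)+2^n\delta+\eta ,
\]
which gives the claim.

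The delicate point is the bookkeeping in Step 3. Every coordinate on which $R$ depends must either be free or lie in $\Lambda_m$, whose contribution was already absorbed into the sup-norm error. I expect this to need care about the relation between bands and the coordinates they constrain: $E_t$ depends on the ancestors of $t$, not just on the parent of $t$. The resolution is to include in $\mathcal Q_m$ all bands $B_u$ with $u\in Q_m$. A node $t$ in a band outside $\Lambda_m$ has parent $\parent(t)\notin Q_m$, so raising $\alpha(\parent(t))$ kills $s_t$ without touching $y_m$.

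\textbf{Step 4 (definition and bounds).} Set
\[
\widetilde w_m:=P_\Lambda w_{m'}+(2^n\delta+2\eta)e_{\varnothing}.
\]
Then \textup{(i)} holds by Step 3, since $s_{\varnothing}=\one$. For \textup{(ii)}, using $\sum_B\lambda_B\le 1+\varepsilon/4$,
\[
\rho_n(\widetilde w_m)<\sum_{B\in\Lambda}\lambda_B+\tfrac{\varepsilon}{16}+2^n\delta+2\eta
<1+\tfrac{\varepsilon}{4}+\tfrac{\varepsilon}{16}+\tfrac{\varepsilon}{16}+\tfrac{\varepsilon}{8}=1+\tfrac{\varepsilon}{2}.
\]
Finally, $\supp\widetilde w_m$ meets only bands in the finite family $\Lambda$ for every $m$, which gives \textup{(iii)}.
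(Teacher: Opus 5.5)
\textbf{Where Step 3 breaks.} The claim $T_nR(\beta)=0$ does not follow from the modification you describe. You raise $\alpha$ only at parents of nodes of $\supp R$ that lie outside $Q_m$ \emph{and} outside the coordinates relevant to $\Lambda$. But the parent of a node of $\supp R$ can be a $\Lambda$-relevant coordinate. That set contains every proper ancestor of a node $u$ with $B_u\in\Lambda$, and the sibling band of such an ancestor need not lie in $\Lambda_m$.

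Here is an example. Take $n\ge 3$, let $\Lambda$ consist of the root band and $B_{(1,1)}$, and suppose $y_m$ is built from the $s_{(k)}$ only, so $Q_m=\{\varnothing\}$. Then:
\begin{itemize}
\item $B_{(1)}\notin\Lambda_m$, so $R$ may contain nodes $(1,k)$;
\item the coordinate $(1)$ is frozen, because $E_{(1,1,j)}$ constrains $\alpha((1))\le 1$;
\item $\varnothing$ is frozen too, so $s_{(1,k)}(\beta)=s_{(1,k)}(\alpha)$, which equals $1$ whenever $\alpha(\varnothing)\le 1$ and $\alpha((1))\le k$.
\end{itemize}
Your choice of $m'$ gives no control on $\rho_n(P_{B_{(1)}}w_{m'})$, so this term is absorbed nowhere. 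Your ``resolution'' paragraph handles the $Q_m$ side of the bookkeeping but not the $\Lambda$ side.

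\textbf{The repair.} Each $E_t$ is cut out by \emph{upper} bounds on coordinates, so every $s_t$ is antitone in each coordinate. Hence you do not need $\beta$ to agree with $\alpha$ on the $\Lambda$-coordinates. Set $\beta(u):=\max\{\alpha(u),1+\max M_u\}$ for every parent $u$ of a node of $\supp R$, where $M_u$ is the set of last labels of those nodes, and $\beta=\alpha$ elsewhere. Then:
\begin{itemize}
\item no such $u$ lies in $Q_m$, so $y_m(\beta)=y_m(\alpha)$;
\item $T_nR(\beta)=0$;
\item $\beta\ge\alpha$ gives $T_nP_\Lambda w_{m'}(\beta)\le T_nP_\Lambda w_{m'}(\alpha)$, which is exactly the direction your chain of inequalities needs.
\end{itemize}
Alternatively, add to $\mathcal Q_m$ the finitely many bands $B_u$ with $u$ a $\Lambda$-relevant coordinate, and choose $m'$ accordingly.

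\textbf{Comparison with the paper.} With this fix your argument is correct, and it takes a genuinely different route. The paper's proof of this lemma is pure bookkeeping in $W_n$. Once $\rho_n(P_{D_j}w_m)<\mu_j$, the piece $P_{D_j}w_m$ is deleted and replaced by $2^n\mu_j e_\varnothing$, which dominates it under $T_n$ because $T_nz\le 2^n\rho_n(z)\one$. It never looks at $x_m$ or at points of $I_n$. It only yields that each band outside $\Lambda$ occurs finitely often, and the transient bands are killed later, in order-theoretic form, by \Cref{lem:thinning,lem:transient} via \Cref{lem:pi-disjoint}.

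You instead kill the transient bands pointwise at once, using the coordinate-raising trick of \Cref{lem:pi-disjoint}. This gives a stronger conclusion: every $\widetilde w_m$ is supported in one fixed finite $\Lambda$. One can then take $v_m=0$, and \Cref{lem:thinning,lem:transient} become unnecessary in \Cref{prop:moderated}. The price is extra input. You need each $x_m\in X_n$ to be a \emph{uniform} limit of functions depending on finitely many coordinates; this uniformity in $\alpha$ is what lets a single $\Lambda_m$ serve every point. You also use monotonicity through $x_m\le x_{m'}$ with $m'=m'(m)$. The paper's trimming, by contrast, works for any $x_m\in C_b(I_n)$ dominated by $T_nw_m$.
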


\begin{proof}
Put $M:=2^n$. Choose a finite family $\Lambda$ of bands, containing
$B_{\varnothing}$, such that
\[
\sum_{B\notin \Lambda} \lambda_B < \frac{\varepsilon}{8M}.
\]
Enumerate the remaining bands as $(D_j)_{j\in\N}$ and set
\[
\mu_j:=\lambda_{D_j}+\frac{\varepsilon}{16M\,2^j}.
\]
Then
\[
\sum_{j=1}^\infty \mu_j < \frac{\varepsilon}{4M}.
\]
For each $j$ choose $N_j\in\N$ such that
\[
\rho_n(P_{D_j} w_m)<\mu_j
\qquad (m\ge N_j).
\]
After increasing the $N_j$ if necessary, we may assume that $(N_j)$ is
strictly increasing.

Now define
\[
\widetilde w_m
:=
w_m-\sum_{j:\,N_j\le m} P_{D_j}w_m
+M\Bigl(\sum_{j:\,N_j\le m}\mu_j\Bigr)e_{\varnothing}.
\]
This is a positive vector.
Let $z\in B_{D_j}^+$ with $\rho_n(z)\le \mu_j$. Since every coefficient of
$z$ has weight at least $2^{-n}=M^{-1}$, we have
\[
\sum_{t\in G_n}|z(t)|\leq 2^n\sum_{t\in G_n}2^{-|t|}|z(t)|\leq M\mu_j.
\]
Therefore
\[
T_n z \le M\mu_j\,\one = T_n(M\mu_j e_{\varnothing}).
\]
Applying this with $z=P_{D_j}w_m$ shows that
$T_n\widetilde w_m\ge T_n w_m\ge x_m$.
Also,
\[
\rho_n(\widetilde w_m)
\le \rho_n(w_m)+M\sum_{j=1}^\infty \mu_j
<1+\frac{\varepsilon}{4}+\frac{\varepsilon}{4}
=1+\frac{\varepsilon}{2}.
\]
Finally, if $D_j\notin \Lambda$ and $m\ge N_j$, then
$P_{D_j}\widetilde w_m=0$. Hence, only bands in $\Lambda$ can occur
infinitely often.
\end{proof}

Replace $(w_m)$ by the sequence given by \Cref{lem:trim}. Let $\Lambda$ be
the finite family of bands that still occur infinitely often.

\begin{lemma}\label{lem:thinning}
\leanformalization{\leanstatement{OrderClosures/WeaklyFatou/Moderated.lean}{15}{tree_thinning}}
After passing to a further subsequence, we may suppose that every band
$B\notin \Lambda$ meets the support of $w_m$ for at most one value of $m$.
\end{lemma}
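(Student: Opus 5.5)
After \Cref{lem:trim}, only the finitely many bands in $\Lambda$ meet $\supp w_m$ for infinitely many $m$. Every other band meets $\supp w_m$ for only finitely many $m$. The plan is a greedy diagonal extraction of a subsequence $m_1<m_2<\cdots$ along which each band outside $\Lambda$ is met at most once.

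First, for each $m$ let $\mathcal{B}_m$ be the set of bands $B\notin\Lambda$ with $P_B w_m\neq 0$. Since $w_m$ is finitely supported, $\mathcal{B}_m$ is finite. Also, for each fixed band $B\notin\Lambda$, the set $\{m : B\in\mathcal{B}_m\}$ is finite by property (iii) of \Cref{lem:trim}. Set $\mathcal{F}_m:=\{m' : \mathcal{B}_{m'}\cap\mathcal{B}_m\neq\varnothing\}$. It is a finite union of finite sets, so $\mathcal{F}_m$ is finite.

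Second, choose the indices inductively. Let $m_1$ be arbitrary. Having chosen $m_1<\dots<m_k$, pick
\[
m_{k+1}>\max\Bigl(\{m_k\}\cup\mathcal{F}_{m_1}\cup\dots\cup\mathcal{F}_{m_k}\Bigr).
\]
Then $\mathcal{B}_{m_{k+1}}\cap\mathcal{B}_{m_i}=\varnothing$ for all $i\le k$. Hence the sets $\mathcal{B}_{m_k}$ are pairwise disjoint, so each band $B\notin\Lambda$ meets $\supp w_{m_k}$ for at most one $k$.

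Third, check that the properties established earlier survive passing to this subsequence. Since $(x_m)$ is increasing, relabel $x'_k:=x_{m_k}$ and $w'_k:=w_{m_k}$. Then $x'_k\le T_n w'_k$ and $\rho_n(w'_k)<1+\varepsilon/2$ still hold. The band limits $\lambda_B$ are unchanged along subsequences. Every band still occurring infinitely often lies in $\Lambda$. Moreover, any subsequence of $(T_n w_m)$ still dominates the whole original sequence $(x_m)$, so nothing about upper bounds is lost.

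The only mildly delicate point is the finiteness of $\mathcal{F}_m$, which uses both finite support of $w_m$ and property (iii) of \Cref{lem:trim}. Apart from that, the argument is routine bookkeeping.
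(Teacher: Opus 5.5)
Your proposal is correct and follows essentially the paper's argument. The paper records, for each band $B\notin\Lambda$, its last occurrence $\ell(B)$, and picks $m_{k+1}>\max\bigl(\{m_k\}\cup\{\ell(B):B\in\mathcal B_{m_k}\}\bigr)$; your $\max\mathcal F_m$ is exactly $\max_{B\in\mathcal B_m}\ell(B)$, and the union over all earlier stages that you take is harmless but unnecessary, since monotonicity of $(m_k)$ already keeps later indices beyond the earlier $\ell(B)$'s.
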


\begin{proof}
For every band $B\notin \Lambda$, let $\ell(B)$ be the last index $m$ for
which $P_B w_m\ne 0$.

Choose $m_1<m_2<\cdots$ inductively as follows. Having chosen $m_k$, let
$\mathcal F_k$ be the finite set of bands $B\notin \Lambda$ with
$P_B w_{m_k}\ne 0$, and choose
\[
m_{k+1}>\max\bigl(\{m_k\}\cup\{\ell(B):B\in \mathcal F_k\}\bigr).
\]
Then every band outside $\Lambda$ which appears at stage $m_k$ never appears
again. Passing to the subsequence $(w_{m_k})$ proves the claim.
\end{proof}

Write
\[
u_m:=P_{\Lambda} w_m,
\qquad
v_m:=w_m-u_m.
\]
By construction, the bands meeting $\supp v_m$ are disjoint from those
meeting $\supp v_{m'}$ whenever $m\ne m'$.

\begin{lemma}\label{lem:transient}
\leanformalization{\leanstatement{OrderClosures/WeaklyFatou/Moderated.lean}{94}{tree_transient}}
The finite sets $F_m:=\supp v_m$ are pairwise $\parent$-disjoint.
Consequently, the sequence $(T_n v_m)$ has no non-zero common lower bound.
\end{lemma}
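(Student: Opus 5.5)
The plan has two parts: first the $\parent$-disjointness of the sets $F_m$, which is purely combinatorial, and then an appeal to \Cref{lem:pi-disjoint}.

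\textbf{Step 1: the sets $F_m$ are pairwise $\parent$-disjoint.} Each band is attached to a fixed parent. The band $B_u$ is supported on $C_u$, and every element of $C_u$ has parent $u$. The band $B_{\varnothing}=\R e_{\varnothing}$ has parent $\varnothing$. So for a finite $F\subseteq G_n$, the set $\parent[F]$ is determined by which bands meet $F$:
\begin{itemize}
\item $u\in\parent[F]$ with $u\neq\varnothing$ happens exactly when $B_u$ meets $F$;
\item $\varnothing\in\parent[F]$ happens exactly when $B_{\varnothing}$ or $B_{\varnothing}$'s sibling band $B_{\varnothing}$ in the $H_n$-family (the band supported on $C_{\varnothing}=\N^1$) meets $F$.
\end{itemize}
Since $B_{\varnothing}\in\Lambda$ by the choice in \Cref{lem:trim}, $v_m$ has no component in $B_{\varnothing}$. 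Hence $\parent[F_m]=\{u\in H_n : B_u \text{ meets } \supp v_m\}$. Here $B_u$ denotes the band in the family indexed by $H_n$, including $u=\varnothing$, which corresponds to the children $C_{\varnothing}$ of the root.

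By \Cref{lem:thinning} and the definition $v_m=w_m-P_\Lambda w_m$, every band meeting $\supp v_m$ lies outside $\Lambda$ and meets the support of $w_{m'}$ for no $m'\neq m$. Consequently $\parent[F_m]\cap\parent[F_{m'}]=\varnothing$ for $m\ne m'$. The one point to check carefully is that no element of $F_m$ is the root itself, so that the convention $\parent(\varnothing)=\varnothing$ cannot create a spurious collision. This is exactly guaranteed by $B_{\varnothing}\in\Lambda$.

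\textbf{Step 2: no non-zero common lower bound.} Let $0\le h\le T_nv_m$ for all $m$. Put $c_m:=\sum_t v_m(t)$, so that $T_nv_m\le c_m\sup_{t\in F_m}s_t$. By the estimate in the proof of \Cref{lem:norm-comparison}, $c_m\le 2^n\rho_n(v_m)\le 2^n(1+\varepsilon/2)=:c$, a bound independent of $m$. Hence $h\le c\,\sup_{t\in F_m}s_t$ for every $m$. By \Cref{lem:pi-disjoint},
\[
\inf_m \sup_{t\in F_m}s_t=0
\]
in $C_b(I_n)$, so $h/c\le 0$ and therefore $h=0$.

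If "common lower bound" is read in $X_n$ rather than in $C_b(I_n)$, the conclusion still holds, because $X_n\subseteq C_b(I_n)$. If some $F_m$ are empty, the infimum is trivially $0$, so that case needs no separate treatment.

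The only genuine subtlety is the root bookkeeping in Step 1. Everything else is immediate from the construction and \Cref{lem:pi-disjoint}.
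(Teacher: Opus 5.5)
Your proof is correct and follows the paper's argument. The $\parent$-disjointness comes from each non-root node $t$ lying in the band attached to $\parent(t)$, combined with \Cref{lem:thinning}; the second claim comes from the bound $T_n v_m\le c\sup_{t\in F_m}s_t$, with $c$ independent of $m$, together with \Cref{lem:pi-disjoint}. You also check explicitly that the root never lies in $F_m$, because the root band $\R e_{\varnothing}$ belongs to $\Lambda$; the paper's proof leaves this point implicit, though your bullet describing it is muddled by the paper's double use of the symbol $B_{\varnothing}$.
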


\begin{proof}
If $s,t\in G_n\setminus\{\varnothing\}$ belong to the same sibling class,
then $\parent(s)=\parent(t)$. Thus, two supports can share a parent only if
they meet the same band $B_u$ for some $u\in H_n$. By
\Cref{lem:thinning}, this never happens for distinct $m$, so the sets $F_m$
are pairwise $\parent$-disjoint.

Let $z\in X_n^+$ be a lower bound of all $T_n v_m$. Put
$C:=2^n\sup_m \rho_n(v_m)$. Since every coefficient has weight at least
$2^{-n}$, we have
\[
T_n v_m \le C \sup_{t\in F_m} s_t.
\]
If $C=0$, then every $v_m$ is zero and there is nothing to prove. Hence,
\[
C^{-1}z \le \sup_{t\in F_m} s_t
\qquad (m\in\N).
\]
\Cref{lem:pi-disjoint} now gives $z=0$.
\end{proof}

The preceding lemmas allow us to finally bound the weak Fatou constant of $(X_n,p_n)$.

\begin{proposition}\label{prop:moderated}
\leanformalization{\leanstatement{OrderClosures/WeaklyFatou/Moderated.lean}{173}{component_moderated}}
For every increasing sequence $(x_m)$ in $X_n^+$ with $p_n(x_m)\le 1$ and
for every $\varepsilon>0$, there exists $y\in T_n[W_n^+]$ such that
\[
x_m\le y \quad (m\in\N),
\qquad
p_n(y)\le 2+\varepsilon.
\]
\end{proposition}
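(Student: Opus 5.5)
The plan is to split the dominating vectors into a recurring part and a transient part. The recurring part gets pushed one level up the tree with the upshift $S_n$, where it lives in a finite-dimensional space and can be replaced by a limit. The transient part gets discarded using \Cref{lem:transient}.

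\textbf{Setup.} Work with the sequence $(w_m)$ produced by \Cref{lem:sharp-subsequence}, \Cref{lem:trim} and \Cref{lem:thinning}. Passing to a subsequence is harmless, since $(x_m)$ is increasing and every upper bound of a subsequence bounds the whole sequence. After these reductions:
\begin{itemize}
\item $x_m\le T_n w_m$ and $\rho_n(w_m)<1+\varepsilon/2$;
\item $w_m=u_m+v_m$, where $u_m=P_\Lambda w_m$ is supported in the finitely many bands of $\Lambda$;
\item the supports $F_m=\supp v_m$ are pairwise $\parent$-disjoint.
\end{itemize}

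\textbf{Step 1: the recurring part.} Apply the upshift to $u_m$. By \Cref{lem:upshift-basic}, $T_n u_m\le T_n S_n u_m$ and
\[
\rho_n(S_n u_m)\le 2\rho_n(u_m)\le 2\rho_n(w_m)<2+\varepsilon.
\]
Since $S_n$ sends the whole band $B_u$ to the single vector $e_u$, every $S_n u_m$ lies in the finite-dimensional space $V$ spanned by $e_\varnothing$ and the $e_u$ with $B_u\in\Lambda$. The sequence is bounded there, so after passing to a further subsequence $S_n u_m\to \bar u\in V^+$ coordinatewise. Then $\rho_n(\bar u)\le 2+\varepsilon$.

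Fix $\delta>0$. Because $V$ is finite-dimensional and $T_n e_u\le \one=T_n e_\varnothing$, there is $m_0$ such that
\[
T_n S_n u_k\le T_n(\bar u+\delta e_\varnothing)\qquad (k\ge m_0).
\]
Define $y:=T_n(\bar u+\delta e_\varnothing)\in T_n[W_n^+]$. This lies in $X_n$, and
\[
p_n(y)\le\rho_n(\bar u)+\delta\le 2+\varepsilon+\delta .
\]
Renaming $\varepsilon$ gives the required norm bound.

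\textbf{Step 2: domination, which is the main point.} Fix $m$ and put $z:=(x_m-y)^+\in X_n^+$. For every $k\ge\max(m,m_0)$, monotonicity of $(x_m)$ gives
\[
x_m\le x_k\le T_n u_k+T_n v_k\le y+T_n v_k,
\]
so $z\le T_n v_k$. Thus $z$ is a common lower bound of a tail of $(T_n v_k)$. That tail still has pairwise $\parent$-disjoint supports, so the argument of \Cref{lem:transient}, which reduces to \Cref{lem:pi-disjoint}, forces $z=0$. Hence $x_m\le y$ for every $m$.

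The expected obstacle is exactly this step. A single dominating vector must be extracted even though the transient pieces $T_n v_k$ are not small in norm. The trick is that they are ``small in order'': they have no common nonzero lower bound. That property is what \Cref{lem:trim} and \Cref{lem:thinning} were designed to secure, and the constant $2$ comes only from the upshift of the recurring part.
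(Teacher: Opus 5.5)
Your proposal is correct and follows the paper's proof essentially step by step. It uses the same reductions via \Cref{lem:sharp-subsequence,lem:trim,lem:thinning}, the same upshift of the recurring part $u_m$ into a finite-dimensional space to extract a limit, and the same application of \Cref{lem:transient} to a tail to force $(x_m-y)^+=0$; the only cosmetic difference is that you absorb the slack $\delta e_\varnothing$ into $y$ at a cost of $\delta$ in the norm, whereas the paper keeps $y=T_n u$ and lets $\eta\downarrow 0$ in $x_j\le y+\eta\one$.
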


\begin{proof}
Start with the sequence constructed above and apply
\Cref{lem:sharp-subsequence,lem:trim,lem:thinning}. Whenever we pass to a
subsequence of $(w_m)$, we pass to the corresponding subsequence of
$(x_m)$ and relabel. Since the original sequence $(x_m)$ is increasing, any
upper bound for the relabelled subsequence is also an upper bound for the
original sequence.

The supports of $S_n u_m$ lie in the finite set consisting of the root
together with the parents of the finitely many bands in $\Lambda$. Hence
$(S_n u_m)$ lives in a finite-dimensional subspace of $W_n$. By boundedness
and \Cref{lem:upshift-basic}, after passing to another subsequence we may
assume that $S_n u_m$ converges in $W_n$ to some $u\in W_n^+$. Put
$y:=T_n u$.
Again by \Cref{lem:upshift-basic}, we have
\[
\rho_n(u)=\lim_m \rho_n(S_n u_m)
\le 2\sup_m \rho_n(u_m)
\le 2+\varepsilon,
\]
so
\[
p_n(y)\le \rho_n(u)\le 2+\varepsilon.
\]
Since $T_n$ is continuous on the finite-dimensional space containing all
$S_n u_m$, we have
\[
\|T_n S_n u_m-y\|_\infty \longrightarrow 0.
\]
Fix $j\in\N$ and $\eta>0$. Choose $m_0$ such that
\[
\|T_n S_n u_m-y\|_\infty\le \eta
\qquad (m\ge m_0).
\]
For $m\ge \max\{j,m_0\}$ we have
\[
x_j\le x_m\le T_n w_m = T_n u_m + T_n v_m \le T_n S_n u_m + T_n v_m
\le y+\eta\one + T_n v_m.
\]
Hence
\[
(x_j-y-\eta\one)^+ \le T_n v_m
\qquad (m\ge \max\{j,m_0\}).
\]
By \Cref{lem:transient}, this tail has no non-zero common lower bound.
Therefore
$(x_j-y-\eta\one)^+=0$, that is, $x_j\le y+\eta\one$.
Letting $\eta\downarrow 0$ gives $x_j\le y$. Since $j$ was arbitrary, $y$
dominates the whole sequence.
\end{proof}

\begin{corollary}\label{cor:weak-fatou}
\leanformalization{\leanstatement{OrderClosures/WeaklyFatou/Moderated.lean}{423}{component_weakFatou}}
The Banach lattice $(X_n,p_n)$ has weak Nakano constant $2$, hence weak
Fatou constant $2$.
\end{corollary}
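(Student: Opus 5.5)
The plan is to combine \Cref{prop:moderated} with \Cref{prop:separable-reduction}. First we show that $2$ is a weak sequential Nakano constant for $(X_n,p_n)$.

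Let $(x_m)$ be an increasing sequence in $X_n^+$ with an upper bound in $X_n$ and with $\sup_m p_n(x_m)\le 1$, and fix $\varepsilon>0$. \Cref{prop:moderated} gives $y\in T_n[W_n^+]$ with $x_m\le y$ for all $m$ and $p_n(y)\le 2+\varepsilon$. The one point to check is that $y$ lies in $X_n$. This holds because $y=T_nu=\sum_t u(t)s_t$ is a finite positive combination of generators $s_t$, and $X_n$ is the closed sublattice generated by these. Also $y\ge 0$. Hence $y$ is an admissible upper bound in $X_n^+$. (The hypothesis that $(x_m)$ is order bounded in $X_n$ is not even needed.)

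Next, $X_n$ is separable by \Cref{cor:Yn-basic}. So \Cref{prop:separable-reduction} upgrades the sequential constant to a weak Nakano constant $2$ for upward directed, order bounded subsets of the unit ball of $X_n^+$.

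Finally, we deduce the weak Fatou property with constant $2$. Let $0\le x_\alpha\uparrow x$ in $X_n$ with $s:=\sup_\alpha p_n(x_\alpha)$. If $s=0$, then each $x_\alpha=0$, so $x=0$. Otherwise $\{x_\alpha/s\}$ is an upward directed subset of the unit ball of $X_n^+$, bounded above by $x/s$. For every $\varepsilon>0$ the weak Nakano constant gives an upper bound $z$ with $p_n(z)\le 2+\varepsilon$. Since $x/s$ is the least upper bound, $0\le x/s\le z$. Monotonicity of the lattice seminorm (\Cref{lem:pn-seminorm}) then gives $p_n(x)\le(2+\varepsilon)s$, and letting $\varepsilon\downarrow0$ yields $p_n(x)\le 2\sup_\alpha p_n(x_\alpha)$.

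No step here is genuinely hard, since all the technical work sits in \Cref{prop:moderated}. The only point needing care is confirming that the bound $y$ from \Cref{prop:moderated} belongs to $X_n$ rather than merely to $C_b(I_n)$, because suprema must be computed in $X_n$.
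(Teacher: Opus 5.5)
Your proposal is correct and follows the paper's proof. \Cref{prop:moderated} supplies the sequential upper bound in $T_n[W_n^+]\subseteq X_n^+$, and \Cref{prop:separable-reduction} together with the separability of $X_n$ upgrades this to weak Nakano constant $2$ and weak Fatou constant $2$. Your extra checks (that $y$ lies in $X_n$, and the scaling argument for the Fatou inequality) are details the paper leaves implicit or already folds into \Cref{prop:separable-reduction}.
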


\begin{proof}
The positive set $T_n[W_n^+]$ satisfies the hypothesis of
\Cref{prop:separable-reduction} by \Cref{prop:moderated}. Since $X_n$ is
separable, the conclusion follows.
\end{proof}



Let now
\[
L_n:=\{s_t: t\in \N^n\}
\]
be the set of characteristic functions of terminal nodes of the tree $G_n$.

\begin{proposition}\label{prop:component}
\leanformalization{\leanstatement{OrderClosures/WeaklyFatou/FinalSpace.lean}{21}{component_large_iterated_adherence}}
For each $n\in\N$:
\begin{enumerate}[label=\textup{(\roman*)}]
\item $(X_n,p_n)$ is a separable Banach lattice with weak Fatou constant $2$;
\item the constant function $\one$ satisfies
$2^n \one \in B_{X_n}^{\,o(n)}$;
\item $p_n(2^n\one)=2^n$.
\end{enumerate}
\end{proposition}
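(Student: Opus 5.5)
Parts (i) and (iii) are immediate from earlier results. For (i), \Cref{cor:Yn-basic} gives that $(X_n,p_n)$ is a separable Banach lattice, and \Cref{cor:weak-fatou} gives weak Fatou constant $2$. For (iii), homogeneity of $p_n$ (\Cref{lem:pn-seminorm}) together with $p_n(\one)=p_n(s_\varnothing)=1$ from \Cref{cor:Yn-basic} yields $p_n(2^n\one)=2^n$.

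The real content is (ii). We prove by downward induction on the level $k$, for $k=n,n-1,\dots,0$, that
\[
2^{n-k}\,2^{k}s_t = 2^n s_t\in B_{X_n}^{\,o(n-k)}\qquad\text{for every } t\in\N^k .
\]
At $k=0$, where $s_\varnothing=\one$, this is exactly (ii).

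For the base case $k=n$, we have $p_n(2^n s_t)=2^n\cdot 2^{-n}=1$ by \Cref{lem:exact-basis}, so $2^n s_t\in B_{X_n}=B_{X_n}^{\,o(0)}$.

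For the inductive step, fix $t\in\N^k$ with $k<n$. By the induction hypothesis, each child satisfies $2^n s_{t^\frown m}\in B_{X_n}^{\,o(n-k-1)}$. By \Cref{lem:basic-tree}(b),(c), the sequence $(2^n s_{t^\frown m})_m$ is increasing, since the sets $E_{t^\frown m}$ increase in $m$, and its pointwise supremum is $2^n s_t$. A pointwise supremum that is itself a continuous function lying in $X_n$ is also the supremum in the vector lattice $X_n$. Indeed, any upper bound $f\in X_n\subseteq C_b(I_n)$ of the family dominates it pointwise, hence dominates $2^n s_t$. The set $\{2^n s_{t^\frown m}\}$ is an upward directed subset of $B_{X_n}^{\,o(n-k-1)}\cap X_n^+$ (this set is solid, since the order adherence of a solid set is solid), and its supremum in $X_n$ is $2^n s_t$. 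So by the definition of order adherence, $2^n s_t\in B_{X_n}^{\,o(n-k)}$. This closes the induction.

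The only point needing care is that the supremum has to be taken in $X_n$, not in $C_b(I_n)$ or pointwise. This is settled by the fact that $s_t\in X_n$ and that pointwise domination implies order domination in a space of functions. Throughout the induction we also use the convention $A^{o(m+1)}=(A^{o(m)})^o$ and the solidity of each iterate. I do not expect any genuine obstacle: the heavy lifting, namely the weak Fatou estimate, was already done in \Cref{prop:moderated}.
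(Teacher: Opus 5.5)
Your proof is correct and follows the paper's argument. Parts (i) and (iii) are read off from \Cref{cor:Yn-basic}, \Cref{cor:weak-fatou} and $p_n(\one)=1$, and (ii) is the same downward induction over tree levels via \Cref{lem:basic-tree}(c); the only differences are cosmetic: the paper inducts in the iterates of $A_n=\operatorname{sol}(2^nL_n)\subseteq B_{X_n}$ and finishes with monotonicity (\Cref{lem:order-basic}(a)) while you induct directly in $B_{X_n}^{\,o(n-k)}$, and you explicitly check that the pointwise supremum is the supremum in $X_n$, which the paper leaves implicit.
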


\begin{proof}
Part \textup{(i)} is \Cref{cor:Yn-basic,cor:weak-fatou}.

For \textup{(ii)}, \Cref{lem:exact-basis} gives
$p_n(2^n s_t)=1$ for every terminal $t$, so the solid hull
\[
A_n:=\operatorname{sol}(2^n L_n)
\]
is contained in $B_{X_n}$.
We claim, more generally, that for every node $t\in G_n$,
\[
2^n s_t \in A_n^{o(n-|t|)}.
\]
If $|t|=n$, this is tautological. If $|t|<n$, then by
\Cref{lem:basic-tree}\textup{(c)},
\[
2^n s_t=\sup_{m\in\N} 2^n s_{t^\frown m}.
\]
The children form an increasing sequence, so
$2^n s_t\in \bigl(A_n^{o(n-|t|-1)}\bigr)^o=A_n^{o(n-|t|)}$.
Taking $t=\varnothing$ yields
$2^n\one\in A_n^{o(n)}\subseteq B_{X_n}^{\,o(n)}$.

Part \textup{(iii)} follows from $p_n(\one)=1$.
\end{proof}


Finally, let us consider the space
\[
X:=c_0(X_n)_{n\in\N},
\]
equipped with the usual supremum  norm
\[
\|(x_n)\|:=\sup_n p_n(x_n).
\]

\begin{lemma}\label{lem:c0-weak-fatou}
\leanformalization{\leanstatement{OrderClosures/WeaklyFatou/FinalSpace.lean}{279}{finalSpace_weakFatou}}
$X$ is a Banach lattice with weak Fatou constant $2$.
\end{lemma}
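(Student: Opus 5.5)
The plan is to verify the two ingredients separately: that $X=c_0(X_n)$ with $\|(x_n)\|=\sup_n p_n(x_n)$ is a Banach lattice, and that it has weak Fatou constant $2$. The order in $X$ is coordinatewise. Completeness follows because each $(X_n,p_n)$ is complete (\Cref{cor:Yn-basic}) and $c_0$-sums of Banach lattices are Banach lattices. The norm is a lattice norm since each $p_n$ is one (\Cref{lem:pn-seminorm}).

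For the weak Fatou estimate, let $0\le x_\alpha\uparrow x$ in $X$ with $\sup_\alpha\|x_\alpha\|\le 1$. Write $x_\alpha=(x_\alpha^{(n)})_n$ and $x=(x^{(n)})_n$. The first step is to check that suprema in $X$ are computed coordinatewise: $x^{(n)}_\alpha\uparrow x^{(n)}$ in $X_n$ for each $n$. The bound $x^{(n)}_\alpha\le x^{(n)}$ is clear. Now suppose $z\in X_n$ is another upper bound of $(x^{(n)}_\alpha)_\alpha$. Replace the $n$-th coordinate of $x$ by $z\wedge x^{(n)}$; this changes only one coordinate, so the result still lies in the $c_0$-sum. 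It is an upper bound of $(x_\alpha)$, so it dominates $x$, which gives $x^{(n)}\le z$.

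The second step applies \Cref{cor:weak-fatou} in each coordinate. Since $p_n(x^{(n)}_\alpha)\le\|x_\alpha\|\le 1$, we get $p_n(x^{(n)})\le 2\sup_\alpha p_n(x^{(n)}_\alpha)\le 2$. Taking the supremum over $n$ gives $\|x\|\le 2\sup_\alpha\|x_\alpha\|$, after homogeneity.

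The only delicate point is the coordinatewise identification of suprema, which depends on the coordinate-modification trick staying inside the $c_0$-sum. Everything else reduces directly to \Cref{cor:weak-fatou}. Alternatively, one could verify weak Nakano constant $2$ coordinatewise and take upper bounds $y^{(n)}$ with $p_n(y^{(n)})\le 2+\varepsilon$. However, assembling $(y^{(n)})$ might fail to lie in $c_0$, so the direct weak Fatou route above is preferable.
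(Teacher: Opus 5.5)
Your proposal is correct and follows the paper's proof: bound each coordinate using the weak Fatou constant $2$ of $(X_n,p_n)$, then take the supremum over $n$. Your coordinate-modification argument showing $x_\alpha^{(n)}\uparrow x^{(n)}$ in $X_n$ makes explicit a step the paper uses without comment. Your worry about the Nakano route is also removable: replacing $y^{(n)}$ by $y^{(n)}\wedge x^{(n)}$ keeps the assembled vector in the $c_0$-sum.
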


\begin{proof}
Completeness is standard for $c_0$-sums. Let $0\le x_\alpha\uparrow x$ in
$X$. For each $n$, the $n$th coordinate satisfies
\[
p_n(x(n))\le 2\sup_\alpha p_n(x_\alpha(n))
\le 2\sup_\alpha \|x_\alpha\|.
\]
Taking the supremum over $n$ gives
\[
\|x\|\le 2\sup_\alpha \|x_\alpha\|.
\]
\end{proof}

For each $n$, let $z_n\in X$ be the vector whose $n$th coordinate is
$2^n\one\in X_n$ and whose other coordinates are $0$.

\begin{proposition}\label{prop:zn}
\leanformalization{\leanstatement{OrderClosures/WeaklyFatou/FinalSpace.lean}{507}{finalLargeVector_properties}}
For every $n\in\N$ one has
\[
z_n\in B_X^{\,o(n)}
\qquad\text{and}\qquad
\|z_n\|=2^n.
\]
\end{proposition}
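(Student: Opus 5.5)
The plan is to transport \Cref{prop:component} into the $c_0$-sum through the coordinate embedding. Let $J_n:X_n\to X$ be the map sending $x\in X_n$ to the vector with $n$th coordinate $x$ and all other coordinates $0$. Then $z_n=J_n(2^n\one)$, and the norm identity is immediate: $\|z_n\|=p_n(2^n\one)=2^n$ by \Cref{prop:component}\textup{(iii)}.

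For the membership claim, we will show the stronger statement $J_n\bigl(A^{o(m)}\bigr)\subseteq (J_n A)^{o(m)}$ for every solid $A\subseteq X_n$ and every $m\ge 0$. Since $J_n(B_{X_n})\subseteq B_X$ and $J_n$ maps solid sets to solid sets, \Cref{lem:order-basic}\textup{(a)} then gives
\[
z_n\in J_n\bigl(B_{X_n}^{\,o(n)}\bigr)\subseteq \bigl(J_nB_{X_n}\bigr)^{o(n)}\subseteq B_X^{\,o(n)}.
\]
By induction on $m$ it suffices to treat a single step, together with the fact that $J_n(A^o)$ is solid; this holds because $J_n$ is an injective lattice homomorphism onto the band of vectors supported on the $n$th coordinate, and that band is an ideal of $X$.

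The key step is the following. Let $B\subseteq A\cap (X_n)_+$ be upward directed with $\sup B=x$ in $X_n$. We must check that $\sup J_n[B]=J_n x$ in $X$. Clearly $J_n x$ is an upper bound of $J_n[B]$. Let $y\in X$ be any upper bound. Then $y(n)\ge b$ for every $b\in B$, so $y(n)\ge x$. For $k\ne n$, the coordinates of the elements of $J_n[B]$ are $0$, so $y(k)\ge 0$. Hence $y\ge J_n x$, and the supremum is as claimed. Every element of $A^o$ lies below some such $x$ in absolute value, so its image lies in the solid hull of $\{J_n x\}$, which is contained in $(J_nA)^o$. This completes the single step.

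The only point needing care is that suprema in $X_n$ remain suprema in $X$, i.e.\ that $J_n$ preserves arbitrary suprema. The coordinatewise argument above handles this directly, because the order on a $c_0$-sum is coordinatewise. So I expect no real obstacle here.
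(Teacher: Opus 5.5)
Your proof is correct and follows the same approach as the paper, which simply identifies $X_n$ with the $n$th coordinate band of $X$ and observes that the same directed families witness $z_n\in B_X^{\,o(n)}$, with $\|z_n\|=p_n(2^n\one)=2^n$. Your induction $J_n\bigl(A^{o(m)}\bigr)\subseteq (J_nA)^{o(m)}$, together with the coordinatewise check that suprema in $X_n$ remain suprema in $X$, is exactly the detail behind that identification.
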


\begin{proof}
By \Cref{prop:component},
$2^n\one\in B_{X_n}^{\,o(n)}$. Identifying $X_n$ with the $n$th coordinate
band of $X$, the same directed families witness $z_n\in B_X^{\,o(n)}$.
Also $\|z_n\|=p_n(2^n\one)=2^n$.
\end{proof}

\begin{proof}[Proof of \Cref{thm:fremlin-main}]
By \Cref{lem:c0-weak-fatou}, $X$ is weakly Fatou.

Assume, towards a contradiction, that $X$ admits an equivalent lattice norm
$\sigma$ with the Fatou property. Then there exist constants $c,C>0$ such
that
\[
c^{-1}\|x\|\le \sigma(x)\le C\|x\|
\qquad (x\in X).
\]
Hence
\[
B_X \subseteq C\,B_\sigma.
\]
By \Cref{lem:order-basic,lem:fatou-order},
\[
B_X^{\,o(n)}
\subseteq C\,B_\sigma^{\,o(n)}
= C\,B_\sigma
\subseteq Cc\,B_X
\qquad (n\ge 1).
\]
Thus, every element of every iterated order adherence of $B_X$ has
$\|\cdot\|$-norm at most $Cc$.
However, \Cref{prop:zn} gives $z_n\in B_X^{\,o(n)}$ and
\[
\|z_n\|=2^n \to \infty,
\]
a contradiction. Therefore, $X$ is not equivalent to any Fatou lattice norm.
\end{proof}

\section*{Acknowledgments}

We wish to thank A.~Wickstead for sharing the preprint \cite{Elliott} with us and for his guidance on how to credit M.~Elliott.

We are particularly grateful to J.~de Dios Pont for an earlier Lean formalization in March 2026 of the main result in Section \ref{sec:fremlin}. We later Lean verified all of the results in this paper on top of the Banach lattice Lean library, but his contribution was vital for convincing us that this would be possible, and, indeed, motivating the creation of the Banach lattice Lean library in the first place. The original Lean verification was done in the GPT 5.4/Opus 4.6 era; GPT 5.4 was also useful for expanding and understanding parts of the proof of \cite{Elliott}. The final Lean verification of the whole paper was done with GPT 5.6 Sol.

The first and last authors would like to express our sincere gratitude to the Taylor family for hosting us in their lakeside cabin near Edmonton in May 2023. Their exceptional care and hospitality provided the inspiration and initial discussions for the research presented in this work.

A.~Avil\'{e}s was supported by MICIU/AEI /10.13039/501100011033/ and ERDF-A way of making Europe (project PID2021-122126NB-C32). A.~Avil\'{e}s and P.~Tradacete were supported by Fundaci\'{o}n S\'{e}neca - ACyT Regi\'{o}n de Murcia. P.~Tradacete was partially supported by grants PID2024-162214NB-I00 and CEX2023-001347-S funded by MCIN/AEI/10.13039/501100011033.

\bibliographystyle{plain}
\bibliography{refs}

\end{document}